\newcommand{\setsymbol}[1]{\ensuremath{\mathbb{#1}}}%
\newcommand{\R}{\setsymbol{R}}%
\newcommand{\Sphere}{\setsymbol{S}}
\newcommand{\Torus}{\setsymbol{T}}%
\newtheorem{definition}{Definition}[section]
\newtheorem{lemma}[definition]{Lemma}
\newtheorem{theorem}[definition]{Theorem}
\newtheorem{proposition}[definition]{Proposition}
\newtheorem{corollary}[definition]{Corollary}
\newtheorem{example}[definition]{Example}
\newtheorem{remark}[definition]{Remark}
\DeclareMathOperator{\Rm}{Rm}
\DeclareMathOperator{\Ric}{Ric}
\DeclareMathOperator{\scal}{R}
\title{A generalization of Geroch's conjecture}
\author{Simon Brendle}
\address{Department of Mathematics \\ Columbia University \\ New York, NY, 10027}
\email{simon.brendle@columbia.edu}
\author{Sven Hirsch}
\address{Department of Mathematics \\ Duke University \\ Durham, NC, 27708}
\email{sven.hirsch@duke.edu}
\author{Florian Johne}
\address{Department of Mathematics \\ Columbia University \\ New York, NY, 10027}
\email{johne@math.columbia.edu}
\begin{document}

\begin{abstract}
The Theorem of Bonnet--Myers implies that manifolds
with topology 
$M^{n-1} \times \Sphere^1$ do not admit a metric of positive Ricci curvature,
while the resolution of Geroch's conjecture implies
that the torus $\Torus^n$ does not admit a metric of positive scalar curvature.
In this work we introduce a new notion
of curvature interpolating between Ricci and scalar curvature
(so called $m$-intermediate curvature),
and use stable weighted slicings to show that for $n \leq 7$ and $1 \leq m \leq n-1$ 
the manifolds $N^n = M^{n-m} \times \Torus^m$ do not admit a metric 
of positive $m$-intermediate curvature.
\end{abstract}

\maketitle

\section{Introduction}

Closed manifolds with positive Ricci curvature have finite fundamental group
due to the Theorem of Bonnet--Myers, in particular manifolds
of topological type $N^n = M^{n-1} \times \Sphere^1$
do not admit a metric of positive Ricci curvature.
A different proof (at least in dimension $n \leq 7$)
can be obtained by minimizing area in a homology class
and using the stability inequality with test function $f = 1$. 

On the other hand, a conjecture of Geroch asks whether the torus $\Torus^n$
does admit a metric of positive scalar curvature.
This conjecture was resolved by R.~Schoen und S.-T.~Yau 
for $3 \leq n \leq 7$ by using minimal hypersurfaces \cite{Schon-Yau:1979:Structure_PositiveScalar},
and by M.~Gromov and H.-B.~Lawson by using spinors for all dimensions \cite{GromovLawson}.
The non-existence result for metrics of positive scalar curvature
was extended to closed $n$-dimensional aspherical manifolds
for $n \in \{4, 5\}$ independently by O.~Chodosh and C.~Li \cite{ChodoshLi} 
and by M.~Gromov \cite{Gromov}.
For a more detailed overview on topological obstructions to positive scalar curvature we refer to the recent survey by O.~Chodosh and C.~Li \cite{ChodoshLiSurvey}. 

The above obstruction for positive Ricci curvature
and positive scalar curvature raise the following question:
What kind of curvature obstructions can be found for manifolds
of topological type $N^n = M^{n-m} \times \Torus^m$?
This is an interesting question even for the case $N^4 = \Sphere^2 \times \Torus^2$.

To investigate this question we define a family of curvature conditions (for $1 \leq m \leq n-1$)
reducing to Ricci curvature for $m = 1$
and to scalar curvature for $m = n-1$ as follows:
\begin{definition}[Positive $m$-intermediate curvature] \ \\
 Suppose $(N^n,g)$ is a Riemannian manifold.
 For given orthonormal vectors $\{e_1, \dots, e_m\}$ at the point $p \in N$
 extend them to an orthonormal basis $\{e_1, \dots, e_n\}$
 of $T_p M$.
 The $m$-intermediate curvature $\mathcal{C}_m$ of the orthornormal
 vectors $\{e_1, \dots, e_m\}$ is defined by
 \[
  \mathcal{C}_m(e_1, \dots, e_m) := \sum_{p=1}^m\sum_{q=p+1}^n 
  \Rm_N(e_p, e_q, e_p, e_q).
 \] 
 We say that $(N^n,g)$ has positive $m$-intermediate curvature at $p \in N$,
 if we have $\mathcal{C}_m(e_1, \dots, e_m) > 0$
 for any choice of orthornormal vectors $\{e_1, \dots, e_m\}$.
Moreover, we say that the manifold $(N^n,g)$ has positive $m$-intermediate curvature,
 if it has positive $m$-intermediate curvature for all $p \in M$.
\end{definition}

The product manifold $N^n = \Sphere^{n-m} \times \Torus^m$
(with $1 \leq m \leq n-2$)
with the standard metric on both factors has positive $(m+1)$-intermediate curvature, and nonnegative $m$-intermediate curvature.

We observe that the condition of positive $m$-intermediate curvature defines
a non-empty, open, $O(n)$-invariant convex cone in the space of algebraic curvature
tensors for $1 \leq m \leq n-1$. Moreover, under the conditions $2 \leq m \leq n-1$
and $n+2-m \leq k \leq n$
the curvature tensor of $\Sphere^{k-1} \times \R^{n-k+1}$
is contained in this open cone.
The general surgery result due to S.~Hoelzel \cite[Theorem A]{Hoelzel} then
implies that positive $m$-intermediate curvature is preserved
under surgeries of codimension at least $n+2-m$.

\begin{remark}[Connection to other notions of curvature] \ \\
(i) The quantity $ \mathcal{C}_m(e_1, \dots, e_m)$ is a sum of sectional curvatures of planes containing at least one of the vectors $e_1,\hdots,e_m$. In particular, positive $m$-intermediate curvature is a weaker condition than positive sectional curvature.

(ii) A manifold with positive $m$-intermediate curvature has positive scalar curvature. Indeed, the sum $\sum_{1 \leq p_1 < \dots < p_m \leq n} \mathcal{C}_m(e_{p_1}, \dots,e_{p_m})$ is equal to the scalar curvature, up to a factor. 

(iii) There is a connection to
the notion of $(m,n)$-intermediate scalar curvature
introduced (as $m$-curvature) into the literature by M.-L.~Labbi \cite{Labbi:1997:pCurvature}
and also studied by M.~Burkemper, C.~Searle and M.~Walsh \cite{BSW}. More precisely, the $(m,n)$-intermediate scalar curvature defined by
\begin{align*}
    s_{m,n}(e_1, \dots, e_m) =
    \sum_{p=m+1}^n \sum_{q=m+1}^n
    \Rm(e_p,e_q,e_p,e_q)
\end{align*}
satisfies the relation
\begin{align*}
    s_{m,n}(e_1, \dots, e_m)+2 \, \mathcal{C}_m(e_1, \dots, e_m)= \scal.
\end{align*}
In particular, 
$\mathcal{C}_m(e_1, \dots, e_m)$ depends only
on the span of $\{e_1, \dots, e_m\}$, hence the $m$-intermediate curvature $\mathcal{C}_m$
can be regarded as a scalar function on the Grassmannian.

(iv) For $m=n-1$, we obtain $s_{n-1,n}(e_1,\hdots,e_{n-1}) = 0$ and $2 \, \mathcal{C}_{n-1}(e_1,\hdots,e_{n-1}) = \scal$. Hence, the intermediate curvature reduces to the scalar curvature in this case.
\label{remark:ConnectionToOtherConditions}
\end{remark}

The case $m = 2$ (also called bi-Ricci curvature) was
studied by Y.~Shen and R.~Ye in \cite{Shen-Ye:1996:PositiveBiRicci, Shen-Ye:1997:PositiveBiRicci}.
They proved diameter estimates for stable minimal submanifolds
in manifolds of positive bi-Ricci curvature and an estimate
on the homology radius.

Our first main theorem concerns obstructions for the existence
of metrics of positive $m$-intermediate curvature. To that end, we consider a notion of stable weighted slicing. Our definition closely resembles the notion of minimal $k$-slicings
by R.~Schoen and S.-T.~Yau \cite{Schoen-Yau:2017:Structure_PositiveScalar_HigherDimensions}.

\begin{definition}[Stable weighted slicing of order $m$] \ \\
Suppose $1 \leq m \leq n-1$ and let
$(N^n,g)$ be an orientable Riemannian manifold of dimension $\dim N = n$. A stable weighted slicing of order $m$ consists of a collection of orientable and smooth submanifolds $\Sigma_k$, $0 \leq k \leq m$, and a collection of positive functions $\rho_k \in C^\infty(\Sigma_k)$ satisfying the following conditions: 
\begin{itemize}
\item $\Sigma_0 = N$ and $\rho_0 = 1$.
\item For each $1 \leq k \leq m$, $\Sigma_k$ is an embedded two-sided hypersurface in $\Sigma_{k-1}$. Moreover, $\Sigma_k$ is a stable critical point of the $\rho_{k-1}$-weighted area
 \[
  \mathcal{H}^{n-k}_{\rho_{k-1}}(\Sigma) = \int_\Sigma \rho_{k-1} \, d\mu
 \]
in the class of hypersurfaces $\Sigma \subset \Sigma_{k-1}$.
\item For each $1 \leq k \leq m$, the function $\frac{\rho_k}{\rho_{k-1}|_{\Sigma_k}} \in C^\infty(\Sigma_k)$ is a first eigenfunction of the stability operator associated with the $\rho_{k-1}$-weighted area. 
\end{itemize}
 \label{Definition: stable weighted slicing}
\end{definition}
Observe that we use the first eigenfunction of the Jacobi operator of weighted area, while in \cite[p.~7]{Schoen-Yau:2017:Structure_PositiveScalar_HigherDimensions} a perturbed version of the weighted
stability operator (denoted by $Q_k$)
is used.

It is a classical theorem that manifolds with positive Ricci curvature do not admit stable minimal hypersurfaces. Our first theorem shows that manifolds with $m$-intermediate curvature do not allow stable weighted slicings of order $m$.

\begin{theorem}[$m$-intermediate curvature and stable weighted slicings] \ \\
Assume that $1 \leq m \leq n-1$ and $n(m-2) \leq m^2 - 2$. Suppose $(N^n,g)$ is a closed and orientable Riemannian manifold with positive $m$-intermediate curvature. Then $N$ does not admit a stable 
 weighted slicing 
 \[
 \Sigma_m \subset \dots \subset \Sigma_1 \subset \Sigma_0 = N^n
 \]
 of order $m$. 
 \label{theorem:SmoothSlicings}
\end{theorem}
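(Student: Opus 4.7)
The plan is to derive a contradiction by combining the $m$ weighted stability inequalities into a single global integral whose ambient-curvature content reduces to $\int_{\Sigma_m}\mathcal{C}_m\rho_m\,d\mu>0$, and then absorbing the residual second-fundamental-form quadratics via the hypothesis $n(m-2)\le m^2-2$.

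The first step converts each first-eigenfunction equation into a pointwise identity on $\Sigma_k$. Setting $u_k:=\rho_k/\rho_{k-1}\big|_{\Sigma_k}$ for $1\le k\le m$, the weighted stability operator arising from $\int_{\Sigma_k}\rho_{k-1}\,d\mu$ has the form $L_k f=-\Delta_{\Sigma_k}f-\langle\nabla\log\rho_{k-1},\nabla f\rangle_{\Sigma_k}-V_k f$, with potential
\[
V_k=|h_k|^2+\Ric_{\Sigma_{k-1}}(\nu_k,\nu_k)-(\nabla^2\log\rho_{k-1})(\nu_k,\nu_k).
\]
The eigenvalue equation $L_k u_k=\lambda_k u_k$ with $\lambda_k\ge 0$ then yields a pointwise inequality bounding $V_k$ above by data depending only on the weights $\rho_0,\dots,\rho_k$ and their derivatives; rewriting using $\log\rho_k=\log\rho_{k-1}+\log u_k$ makes these derivative terms suitable for integration by parts.

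Next, the Gauss equation is iterated to convert each $\Ric_{\Sigma_{k-1}}(\nu_k,\nu_k)$ into ambient curvature of $N$ together with a quadratic remainder in the second fundamental forms. At a point of $\Sigma_m$ the iterated normals $\{\nu_1,\dots,\nu_m\}$ can be arranged to be orthonormal and completed by $\{e_{m+1},\dots,e_n\}$ tangent to $\Sigma_m$; summing over $k=1,\dots,m$ assembles exactly the combination $\sum_{p=1}^m\sum_{q=p+1}^n\Rm_N(e_p,e_q,e_p,e_q)=\mathcal{C}_m(\nu_1,\dots,\nu_m)$, modulo a quadratic form $Q(h_1,\dots,h_m)$ in the second fundamental forms. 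Integrating the sum of pointwise inequalities against $\rho_m\,d\mu_{\Sigma_m}$ and integrating by parts, the derivative pieces telescope because the identity $\rho_k=\rho_{k-1}u_k$ couples consecutive levels; one is left with
\[
\int_{\Sigma_m}\mathcal{C}_m\,\rho_m\,d\mu\le\int_{\Sigma_m}\Bigl(Q(h_1,\dots,h_m)-\sum_{k=1}^m|h_k|^2\Bigr)\rho_m\,d\mu.
\]

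The main obstacle is the final algebraic step: showing that $Q(h_1,\dots,h_m)-\sum_k|h_k|^2\le 0$ pointwise, independently of the specific $h_k$'s. I would decompose each $h_k$ into its trace and traceless parts on $\Sigma_k$ (of dimension $n-k$) and apply the sharp Cauchy--Schwarz inequality $(\operatorname{tr} h_k)^2\le(n-k)|h_k|^2$; after balancing the coefficients of the trace and traceless contributions across all levels, the resulting scalar inequality admits a nonpositive arrangement precisely when $n(m-2)\le m^2-2$. This is the numerical bottleneck that dictates the dimensional hypothesis. Combined with $\mathcal{C}_m>0$ and $\rho_m>0$ on the closed manifold $\Sigma_m$, this yields the contradiction $0<\int_{\Sigma_m}\mathcal{C}_m\rho_m\,d\mu\le 0$.
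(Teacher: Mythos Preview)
Your outline captures the paper's broad architecture---eigenfunction identities, iterated Gauss equations producing $\mathcal{C}_m$, and a residual quadratic in the second fundamental forms controlled by the dimensional hypothesis---but two of the load-bearing steps are not as you describe them.

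First, the claim that after integrating against $\rho_m\,d\mu_{\Sigma_m}$ ``the derivative pieces telescope'' via integration by parts is not right. The Laplacian in the $k$-th eigenvalue identity is $\Delta_{\Sigma_k}$, not $\Delta_{\Sigma_m}$, so there is nothing to integrate by parts against on $\Sigma_m$. In the paper the derivative terms are handled \emph{pointwise} through two ``slicing identities'' that convert $\Delta_{\Sigma_k}\log\rho_k$ into $\Delta_{\Sigma_{k-1}}\log\rho_{k-1}$ plus $H_{\Sigma_k}^2$ plus a residual first-order term; summing telescopes the Laplacians down to $\Delta_{\Sigma_0}\log\rho_0=0$, but leaves behind a gradient term
\[
\mathcal{G}=\sum_{k=1}^{m-1}\langle D_{\Sigma_k}\log\rho_k,\,D_{\Sigma_k}\log u_k\rangle
\]
that does \emph{not} vanish. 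This term is then estimated separately (with carefully chosen coefficients $\alpha_k=\tfrac{k-1}{2k}$ and the identity $H_{\Sigma_{k+1}}=-\langle D_{\Sigma_k}\log\rho_k,\nu_{k+1}\rangle$) to yield $\mathcal{G}\ge\sum_{k=2}^{m}\bigl(\tfrac12+\tfrac{1}{2(k-1)}\bigr)H_{\Sigma_k}^2$. The weight in the final integral is $\rho_{m-1}^{-1}$, arising from plugging $\psi=\rho_{m-1}^{-1}$ into the stability inequality on $\Sigma_m$, not $\rho_m$.

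Second, and relatedly, the algebraic endgame is not $Q(h_1,\dots,h_m)-\sum_k|h_k|^2\le 0$ via a trace/traceless split. Without the positive $H_{\Sigma_k}^2$ contributions coming from $\mathcal{G}$, the remainder would not have a sign. The paper groups the extrinsic terms into quantities $\mathcal{V}_k$ that contain the specific coefficient $-\bigl(\tfrac12-\tfrac{1}{2(k-1)}\bigr)H_{\Sigma_k}^2$ inherited from the gradient estimate; each $\mathcal{V}_k\ge0$ is then shown by splitting the diagonal of $h_{\Sigma_k}$ into the two blocks indexed by $\{k+1,\dots,m\}$ and $\{m+1,\dots,n\}$ and applying Cauchy--Schwarz and Young with weights $\tfrac{m-1}{m-k}$, $\tfrac{m-k}{m-1}$. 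The condition $n(m-2)\le m^2-2$ emerges exactly as the nonnegativity of the resulting coefficient $\tfrac{m^2-2-n(m-2)}{2(n-m)(m-1)}$. Your proposed decomposition is not sharp enough to reach this threshold.
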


The inequality $n(m-2) \leq m^2 - 2$ is automatically satisfied for $m = 1$  (Ricci curvature), $m=2$ (bi-Ricci curvature), $m = n-2$, and $m = n-1$ (scalar curvature). Moreover, the inequality $n(m-2) \leq m^2-2$ holds for all $n \leq 7$ and all $1 \leq m \leq n-1$. Surprisingly, in dimension $n \geq 8$, the inequality $n(m-2) \leq m^2 - 2$ fails for $m = 3$ (tri-Ricci curvature) and $m=4$ (tetra-Ricci curvature). 

The second step, which essentially
is given in work of R.~Schoen and S.-T.~Yau
\cite{Schon-Yau:1979:Structure_PositiveScalar},
gives a topological condition for the existence of a stable weighted slicing:

\begin{theorem}[Existence of stable weighted slicings] \ \\ 
Assume $n \leq 7$ and $1 \leq m \leq n-1$. Let $N^n$ be a closed and orientable manifold of dimension $n$,
 and suppose that there exists a closed and orientable manifold 
 $M^{n-m}$ and a map $F: N^n \rightarrow M^{n-m} \times \Torus^m$
 with non-zero degree.
 Then for each Riemannian metric $g$ on $N^n$ there exists a stable  weighted slicing
 \[
  \Sigma_m \subset \Sigma_{m-1} \subset \dots \subset \Sigma_1 \subset \Sigma_0 = N^n
 \] 
of order $m$.  In conjunction with Theorem \ref{theorem:SmoothSlicings} we deduce that the manifold $N$ does not admit a metric with positive $m$-intermediate curvature.
 \label{Theorem: Existence of stable weighted slicings}
\end{theorem}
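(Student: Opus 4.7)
The argument is a weighted version of the dimension-descent scheme of R.~Schoen and S.-T.~Yau \cite{Schon-Yau:1979:Structure_PositiveScalar}. Let $\alpha_1, \ldots, \alpha_m \in H^1(M^{n-m} \times \Torus^m; \mathbb{Z})$ denote the generators pulled back from the $m$ circle factors of $\Torus^m$, and let $\omega \in H^{n-m}(M^{n-m} \times \Torus^m; \mathbb{Z})$ denote the pullback of the fundamental class of $M^{n-m}$. Since $F$ has nonzero degree,
\[
F^{\ast}(\alpha_1 \cup \cdots \cup \alpha_m \cup \omega) \in H^n(N; \mathbb{Z})
\]
is a nonzero multiple of the fundamental class of $N$. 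The plan is to construct, for each $0 \leq k \leq m$, a closed, orientable, embedded submanifold $\Sigma_k$ of codimension $k$ in $N$ together with a positive function $\rho_k \in C^{\infty}(\Sigma_k)$, satisfying Definition \ref{Definition: stable weighted slicing} and the inductive nontriviality hypothesis
\[
\Omega_k := F^{\ast}(\alpha_{k+1} \cup \cdots \cup \alpha_m \cup \omega)|_{\Sigma_k} \neq 0 \quad \text{in } H^{n-k}(\Sigma_k; \mathbb{Z}).
\]
The base case $\Sigma_0 = N$, $\rho_0 = 1$ is immediate.

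For the inductive step, suppose $(\Sigma_k, \rho_k)$ has been constructed for some $k < m$. The nontriviality of $\Omega_k$ forces $F^{\ast}\alpha_{k+1}|_{\Sigma_k}$ to be a nonzero class in $H^1(\Sigma_k; \mathbb{Z})$, and Poincar\'e duality on the closed orientable $(n-k)$-manifold $\Sigma_k$ produces a nonzero class $\beta \in H_{n-k-1}(\Sigma_k; \mathbb{Z})$ dual to it. I then minimize $\int_{\Sigma} \rho_k \, d\mu$ over integral currents representing $\beta$: compactness of integral currents yields a minimizer, and since $\dim \Sigma_k \leq n \leq 7$, the regularity theory for elliptic integrands with smooth positive weights produces a smooth, embedded, orientable, two-sided hypersurface $\Sigma_{k+1} \subset \Sigma_k$. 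As a minimizer, $\Sigma_{k+1}$ is a stable critical point of $\rho_k$-weighted area. I define $\rho_{k+1} := (\rho_k|_{\Sigma_{k+1}}) \cdot \varphi_{k+1}$, where $\varphi_{k+1} > 0$ is the first eigenfunction of the stability (Jacobi) operator of the $\rho_k$-weighted area on $\Sigma_{k+1}$; positivity is automatic, since the first eigenfunction of a self-adjoint elliptic operator on a closed manifold does not change sign. The second and third conditions of Definition \ref{Definition: stable weighted slicing} hold at level $k+1$ by construction.

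The induction closes by verifying $\Omega_{k+1} \neq 0$. Since $[\Sigma_{k+1}]$ is Poincar\'e dual in $\Sigma_k$ to $F^{\ast}\alpha_{k+1}|_{\Sigma_k}$, the pairing identity
\[
\langle \gamma|_{\Sigma_{k+1}}, [\Sigma_{k+1}] \rangle = \langle \gamma \cup F^{\ast}\alpha_{k+1}|_{\Sigma_k}, [\Sigma_k] \rangle
\]
holds for every $\gamma \in H^{n-k-1}(\Sigma_k; \mathbb{Z})$. Applied with $\gamma = F^{\ast}(\alpha_{k+2} \cup \cdots \cup \alpha_m \cup \omega)|_{\Sigma_k}$, it reduces $\Omega_{k+1} \neq 0$ to the inductive hypothesis $\Omega_k \neq 0$. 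After $m$ iterations one obtains the desired slicing $\Sigma_m \subset \cdots \subset \Sigma_0 = N$, and combining with Theorem \ref{theorem:SmoothSlicings} immediately rules out metrics of positive $m$-intermediate curvature on $N$.

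The only nonroutine technical point, and hence the main obstacle, is the interior regularity of $\rho_k$-weighted area minimizers. Because $\rho_k$ is a smooth positive density, the variational problem has an elliptic smooth integrand of the type treated by Federer, Almgren and Schoen--Simon, and the ambient dimension restriction $\dim \Sigma_k \leq 7$ is precisely what is needed for smoothness of the minimizer in codimension one. All remaining steps are formal consequences of the compactness of integral currents, Poincar\'e duality for closed orientable manifolds, and standard spectral theory on closed Riemannian manifolds.
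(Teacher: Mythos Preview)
Your proof is correct and follows essentially the same dimension-descent scheme as the paper: minimize weighted area in a topologically nontrivial class, invoke codimension-one regularity in ambient dimension at most $7$, and build $\rho_{k+1}$ from the positive first eigenfunction of the weighted Jacobi operator. The only difference is cosmetic: you encode the inductive nontriviality via integer cohomology and Poincar\'e duality, whereas the paper works with pulled-back differential forms $\omega_1,\dots,\omega_m,\Omega$ and exhibits a member of the minimization class explicitly as the preimage of a regular value of $f_k|_{\Sigma_{k-1}}$; the paper itself remarks that the homology-based formulation (cf.\ \cite[Theorem~4.6]{Schoen-Yau:2017:Structure_PositiveScalar_HigherDimensions}) is an equivalent alternative.
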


The dimensional restriction allows us to invoke the regularity theory for hypersurfaces minimizing
a weighted area. 
As a consequence of the above theorem we observe the following corollary:

\begin{corollary}[Nonexistence of metrics of positive $m$-intermediate curvature] \ \\
The product manifolds
$N^n = M^{n-m} \times \Torus^m$ do not admit a metric
of positive $m$-intermediate curvature
for $n \leq 7$ and $1 \leq m \leq n-1$.
\end{corollary}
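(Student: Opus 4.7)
The plan is to reduce the corollary to a direct application of Theorem~\ref{Theorem: Existence of stable weighted slicings}. I would take the target manifold to be $N^n = M^{n-m} \times \Torus^m$ itself and let $F : N \to M^{n-m} \times \Torus^m$ be the identity map $\Id_N$, which obviously has degree $1 \neq 0$. Suppose, for contradiction, that $N$ admitted a Riemannian metric $g$ of positive $m$-intermediate curvature. Since $n \leq 7$ and $1 \leq m \leq n-1$, the hypotheses of Theorem~\ref{Theorem: Existence of stable weighted slicings} are met, and so applied to $(N,g)$ with the map $F = \Id_N$ it produces a stable weighted slicing
\[
 \Sigma_m \subset \Sigma_{m-1} \subset \cdots \subset \Sigma_1 \subset \Sigma_0 = N^n
\]
of order $m$.

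To close the argument I would invoke Theorem~\ref{theorem:SmoothSlicings}. This requires checking the dimensional inequality $n(m-2) \leq m^2 - 2$; but, as observed immediately after the statement of that theorem, this inequality is automatically satisfied whenever $n \leq 7$ and $1 \leq m \leq n-1$. Therefore Theorem~\ref{theorem:SmoothSlicings} applies to $(N,g)$ and rules out the existence of any stable weighted slicing of order $m$, contradicting the slicing just produced. Hence no such metric $g$ can exist on $N^n = M^{n-m} \times \Torus^m$, which is the desired conclusion.

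Since the deduction is purely formal, there is no real obstacle in the corollary itself; its entire content sits in the two theorems it invokes. The difficult point in Theorem~\ref{theorem:SmoothSlicings} will be the analysis of the weighted stability operators along the successive slices, where the dimensional constraint $n(m-2) \leq m^2-2$ is presumably used to absorb the error terms involving the second fundamental forms that accumulate as one passes from $\Sigma_{k-1}$ to $\Sigma_k$. The difficult point in Theorem~\ref{Theorem: Existence of stable weighted slicings} is the inductive construction of the slices $\Sigma_k$ as smooth hypersurfaces minimizing the $\rho_{k-1}$-weighted area in a suitable homology class — this is where the restriction $n \leq 7$ enters, via the regularity theory for area-minimizing hypersurfaces.
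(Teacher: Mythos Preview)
Your argument is correct and is exactly the paper's intended deduction: the corollary is stated immediately after Theorem~\ref{Theorem: Existence of stable weighted slicings}, whose conclusion already incorporates the appeal to Theorem~\ref{theorem:SmoothSlicings}, and one simply applies it with $F = \Id_N$ of degree $1$. Your commentary on where the real work lies (the extrinsic curvature estimates under the constraint $n(m-2) \leq m^2-2$, and the regularity theory forcing $n \leq 7$) is also accurate.
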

In particular, the manifold $\Sphere^2 \times \Torus^2$ does not admit a metric of positive bi-Ricci curvature.

In Section 2 we introduce our notation and recall
the first and second variation formula for weighted area.
In Section 3, we describe the proof of Theorem \ref{theorem:SmoothSlicings}. Afterwards, in Section 4, we give the proof of Theorem \ref{Theorem: Existence of stable weighted slicings} and establish existence of stable weighted slicings under topological assumptions.

\textbf{Acknowledgements:} 
The first author was supported by the National Science Foundation under grant DMS-2103573 and by the Simons Foundation. The second author would like to thank Hubert Bray and
Yiyue Zhang for their interest in this work, and he acknowledges the hospitality of Columbia University, where this project was initiated.

\section{The first and second variation of weighted area}
\label{Section:Preliminaries}

For a Riemannian manifold $(N^n,g)$ we consider its
Levi-Civita connection $D$ and its 
Riemann curvature tensor $\Rm_N$ given by the formula
\[
 \Rm_N(X,Y,Z,W)
 =
 -g( D_X D_Y Z - D_Y D_X Z - D_{[X,Y]} Z ,W)
\]
for vector fields $X,Y,Z,W \in \Gamma(TN)$. \\
Consider a two-sided embedded submanifold $\Sigma^{n-1}$.
We denote its induced Levi-Civita connection by $D_{\Sigma}$,
its unit normal vector field by $\nu \in \Gamma(N\Sigma)$,
its scalar-valued second fundamental form by $h_{\Sigma}$
and its mean curvature (the trace of the scalar-valued
second fundamental form over $\Sigma$) by $H_{\Sigma}$.
The gradient of a smooth function on $N$ or $\Sigma$
is denoted by $D_N f$ or $D_{\Sigma} f$.

Our arguments employ the first and second variation formula
of a suitably weighted area:
Consider a Riemannian manifold $(N^n,g)$,
a smooth positive function $\rho: N \rightarrow \R$,
and an embedded two-sided closed manifold
$\Sigma \subset N^n$.
For a given smooth function $f \in C^{\infty}(\Sigma)$
we consider a variation 
$F: (-\epsilon, \epsilon) \times \Sigma \rightarrow N$ with $F(0,x) = x$ and $\left . \frac{\partial}{\partial s} F(s,x) \right |_{s=0} = f(x) \, \nu(x)$. In the following, we denote the map $F(s,\cdot)$ by $F_s$. Moreover, we denote by $\Sigma_s$ the image of $F_s$ and by $\nu_s$ the unit normal vector field to $F_s$. 

By precomposing the maps $F_s$ with suitable tangential diffeomorphisms, we can arrange that the variation is normal in the sense that 
\[\frac{\partial}{\partial s} F_s = f_s \, \nu_s,\] 
where $f_s$ is a smooth function on $\Sigma_s$.

We consider the weighted area defined by
  \[
   \mathcal{H}^{n-1}_{\rho}(\Sigma) := \int_{\Sigma} \rho \, d\mu.
  \]

We recall the classical formulae
for the first and second variation of weighted area:
 
\begin{proposition}[First variation of weighted area] \ \\
  The first variation
  of weighted area is given by
  \begin{align*}
      \left. \frac{d}{ds} 
      \mathcal{H}^{n-1}_{\rho}(\Sigma_s) \right|_{s=0}
   = \int_{\Sigma} \rho f \left( H_{\Sigma} + \langle D_N \log \rho, \nu \rangle \right) \, d\mu.
  \end{align*}
 \end{proposition}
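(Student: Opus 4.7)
The strategy is to pull everything back to the fixed domain $\Sigma$, apply the product rule, and invoke the classical unweighted first variation of area together with a pointwise computation of how $\rho$ varies along $F_s$. Since $F_0 = \Id_\Sigma$, the pullback formulation of the weighted area is
\[
\mathcal{H}^{n-1}_{\rho}(\Sigma_s) = \int_{\Sigma} (\rho \circ F_s) \, F_s^{\ast}(d\mu_{\Sigma_s}),
\]
where $d\mu_{\Sigma_s}$ denotes the induced Riemannian measure on $\Sigma_s$. Differentiating at $s=0$ under the integral sign and applying the product rule splits the variation into an integrand contribution and a measure contribution.

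For the integrand contribution, $\rho \in C^{\infty}(N)$ is fixed, while its argument moves with velocity $\partial_s F_s|_{s=0} = f\,\nu$, so
\[
\left.\frac{d}{ds}(\rho \circ F_s)\right|_{s=0} = \langle D_N \rho, f\,\nu \rangle = f\,\langle D_N \rho, \nu\rangle.
\]
For the measure contribution, I would invoke the standard first variation of area for a normal variation of speed $f\,\nu$, namely
\[
\left.\frac{d}{ds} F_s^{\ast}(d\mu_{\Sigma_s})\right|_{s=0} = f \, H_\Sigma \, d\mu.
\]
This identity in turn follows by writing the induced metric $g_{ij}(s)$ on $\Sigma_s$ in local coordinates, using that $\partial_s g_{ij}|_{s=0} = 2f\,h_{\Sigma,ij}$, and computing $\partial_s \log\sqrt{\det g(s)}|_{s=0} = \tfrac{1}{2} g^{ij}\, \partial_s g_{ij}|_{s=0} = f\,H_\Sigma$.

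Summing the two contributions and rewriting $\langle D_N \rho, \nu\rangle = \rho\,\langle D_N \log \rho, \nu\rangle$ yields
\[
\left.\frac{d}{ds}\mathcal{H}^{n-1}_\rho(\Sigma_s)\right|_{s=0}
= \int_\Sigma \rho\, f \left(H_\Sigma + \langle D_N \log\rho, \nu\rangle\right) d\mu,
\]
as claimed. There is no genuine obstacle here: both ingredients are classical, and the weighted statement is simply the product rule applied to a fixed smooth weight on $N$ together with the moving induced volume element of $\Sigma_s$. The only minor point worth noting is the tangential reparametrization that normalizes the variation, which was already carried out before the proposition statement.
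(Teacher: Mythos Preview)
Your proof is correct and follows essentially the same approach as the paper, which simply states that the result ``is a consequence of the first variation formula for area, and the chain rule.'' You have spelled out exactly these two ingredients: the chain rule for $\rho\circ F_s$ and the classical first variation of the induced volume element, combined via the product rule.
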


\begin{proof} 
This is a consequence of the first variation formula for area,
and the chain rule.
\end{proof}

\begin{corollary} \ \\ 
 Suppose $\Sigma$ is a critical point
 of weighted area. Then we have
 \[
 H_{\Sigma} = - \langle D_N \log \rho, \nu \rangle.
 \]
 \label{Corollary:FirstVariation_WeightedArea}
\end{corollary}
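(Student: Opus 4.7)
The plan is to derive the corollary directly from the first variation formula stated in the preceding proposition by applying the fundamental lemma of the calculus of variations. Since $\Sigma$ is assumed to be a critical point of weighted area, the derivative $\tfrac{d}{ds}\mathcal{H}^{n-1}_\rho(\Sigma_s)|_{s=0}$ must vanish for \emph{every} admissible smooth normal variation of $\Sigma$. Because every $f \in C^\infty(\Sigma)$ can be realized as the initial normal velocity of some variation (for instance via $F(s,x) = \exp_x(s f(x)\nu(x))$, followed if necessary by the tangential reparametrization described just before the proposition to ensure the variation is normal), this criticality is equivalent to the vanishing of the first variation formula for every choice of test function $f$.

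Substituting this into the formula from the previous proposition, I obtain
\[
0 = \int_\Sigma \rho\, f \, \bigl( H_\Sigma + \langle D_N \log \rho, \nu\rangle \bigr)\, d\mu
\]
for every $f \in C^\infty(\Sigma)$. The factor multiplying $f$ inside the integrand, namely $\rho \, ( H_\Sigma + \langle D_N \log \rho, \nu\rangle)$, is a fixed smooth function on $\Sigma$, and it pairs to zero with every smooth test function. By the fundamental lemma of the calculus of variations it must therefore vanish identically on $\Sigma$. Since $\rho$ is strictly positive by hypothesis, I may divide through by $\rho$ pointwise to conclude that $H_\Sigma + \langle D_N \log \rho, \nu\rangle = 0$ on $\Sigma$, which is the asserted identity.

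There is essentially no obstacle here: the corollary is a one-step consequence of the preceding proposition. The only point requiring a brief justification is the surjectivity of the map that sends a normal variation to its initial velocity $f$, which is routine via the normal exponential map and allows one to test against an arbitrary $f \in C^\infty(\Sigma)$.
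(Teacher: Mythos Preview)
Your argument is correct and is exactly the standard derivation the paper has in mind; the paper in fact states this corollary without a written proof, treating it as an immediate consequence of the first variation formula. Your explicit use of the fundamental lemma of the calculus of variations and the positivity of $\rho$ fills in precisely the details that are left implicit.
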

For a constant weight
we recover the minimal surface equation $H_{\Sigma} = 0$.

  \begin{proposition}[Second variation formula on critical points] \ \\
  If $\Sigma$ is a critical point of the weighted area functional, then the second variation of weighted area is given by  
  \begin{align*}
    &\left. \frac{d^2}{ds^2} \mathcal{H}^{n-1}_{\rho}(\Sigma_s) \right|_{s=0}  \\
   =&
     \int_{\Sigma}
   \rho \left( -f \Delta_{\Sigma} f - 
   \left( |h_{\Sigma}|^2 + \Ric_N(\nu, \nu) \right) f^2
   + f^2 (D_N^2 \log \rho)(\nu, \nu)
   - f
   \langle D_{\Sigma} \log \rho, D_{\Sigma} f \rangle
   \right) 
   \, d\mu.
  \end{align*}
    \label{Proposition:SecondVariationArea-Weight-CriticalPoints}
 \end{proposition}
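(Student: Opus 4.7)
My plan is to reduce the second variation to the linearization of the \emph{weighted mean curvature}
\begin{align*}
H^\rho_s := H_{\Sigma_s} + \langle D_N \log\rho, \nu_s\rangle
\end{align*}
at $s=0$. By the preceding first variation formula,
\begin{align*}
\frac{d}{ds} \mathcal{H}^{n-1}_\rho(\Sigma_s) = \int_{\Sigma_s} \rho \, f_s \, H^\rho_s \, d\mu_s.
\end{align*}
Differentiating once more and using that the critical point assumption gives $H^\rho_s|_{s=0} = 0$ on $\Sigma$, every term produced by the product rule that still carries a factor of $H^\rho_s|_{s=0}$ (the derivatives of $\rho$, of $f_s$, or of the measure $d\mu_s$) vanishes. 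Only the contribution in which $H^\rho_s$ itself is differentiated survives, yielding
\begin{align*}
\left. \frac{d^2}{ds^2} \mathcal{H}^{n-1}_\rho(\Sigma_s) \right|_{s=0} = \int_\Sigma \rho \, f \left. \frac{dH^\rho_s}{ds}\right|_{s=0} d\mu.
\end{align*}

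It remains to compute $\frac{d H^\rho_s}{ds}\big|_{s=0}$, which I would split into two contributions. The first is the classical Jacobi-type identity for the unweighted mean curvature under a normal variation with speed $f$,
\begin{align*}
\left. \frac{dH_s}{ds}\right|_{s=0} = -\Delta_\Sigma f - (|h_\Sigma|^2 + \Ric_N(\nu,\nu)) f,
\end{align*}
which is independent of $\rho$ and is obtained by differentiating $H_s$ along the flow, commuting $D$ with $\partial_s$ at the cost of a Riemann curvature term (which contracts to the $\Ric_N(\nu,\nu)$ summand), and using the Gauss equation. The second contribution is the $s$-derivative of $\langle D_N \log\rho, \nu_s\rangle$. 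Since $|\nu_s|\equiv 1$, the derivative $\frac{d\nu_s}{ds}\big|_{s=0}$ is tangential to $\Sigma$, and a direct computation in graphical coordinates shows $\frac{d\nu_s}{ds}\big|_{s=0} = -D_\Sigma f$. Combined with $\partial_s F_s|_{s=0} = f\nu$, the chain rule then yields
\begin{align*}
\left. \frac{d}{ds} \langle D_N \log\rho, \nu_s\rangle \right|_{s=0} = f \, (D_N^2 \log\rho)(\nu,\nu) - \langle D_\Sigma \log\rho, D_\Sigma f\rangle,
\end{align*}
where the last term comes from $\langle D_N \log\rho, -D_\Sigma f\rangle$ by replacing $D_N \log\rho$ with its tangential part $D_\Sigma \log\rho$, valid since $D_\Sigma f$ is tangent to $\Sigma$.

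Adding these two pieces and substituting back into the integral for the second variation produces exactly the claimed formula. The only step requiring real care is the derivation of the Jacobi identity for $\frac{dH_s}{ds}\big|_{s=0}$, where the curvature contribution must be tracked carefully; every other step is just the product and chain rules applied to the weight factor $\rho$, so while the bookkeeping is somewhat heavy, conceptually there is nothing beyond standard minimal-hypersurface computations.
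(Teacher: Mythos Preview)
Your proposal is correct and follows essentially the same approach as the paper: differentiate the first variation formula, use the critical-point condition $H_\Sigma + \langle D_N \log\rho,\nu\rangle = 0$ to discard all product-rule terms except the one hitting $H^\rho_s$, and then plug in the standard formulae $\partial_s H_{\Sigma_s}|_{s=0} = -\Delta_\Sigma f - (|h_\Sigma|^2+\Ric_N(\nu,\nu))f$ and $D_s\nu_s|_{s=0} = -D_\Sigma f$. The paper simply cites \cite{HuiskenPolden} for these two identities rather than sketching their derivation, but the logical structure of the argument is identical to yours.
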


\begin{proof} 
 We use normal variations for our computation,
 and hence the first derivative is given by
 \[
 \frac{d}{ds} \int_{\Sigma_s} \rho \, d\mu_s
 =
 \int_{\Sigma_s} \rho f_s \left ( H_{\Sigma_s} + \langle D_N \log \rho,\nu_s \rangle \right ) \, d\mu_s. 
\]
We now differentiate both sides of this equation with respect to $s$, and evaluate the result at $s=0$. By the variation formulas for hypersurfaces, compare for example with \cite{HuiskenPolden}, the first order change in the mean curvature is given by
\[
\left . \frac{\partial}{\partial s} H_{\Sigma_s} \right |_{s=0} = -\Delta_\Sigma f -   \left( |h_{\Sigma}|^2 + \Ric_N(\nu, \nu) \right) f,
\] 
whereas the first order change in the normal vector field is given by 
\[
\left . D_s \nu_s \right |_{s=0} = -D_\Sigma f.
\] 
This implies 
\begin{align*} 
&\left . \frac{\partial}{\partial s} \left ( H_{\Sigma_s} + \langle D_N \log \rho,\nu_s \rangle \right ) \right |_{s=0} \\ 
&= -\Delta_\Sigma f -   \left( |h_{\Sigma}|^2 + \Ric_N(\nu, \nu) \right) f + (D_N^2 \log \rho)(\nu,\nu) f - \langle D_\Sigma \log \rho,D_\Sigma f \rangle, 
\end{align*} 
hence 
 \begin{align*}
    &\left. \frac{d^2}{ds^2} \mathcal{H}^{n-1}_{\rho}(\Sigma_s) \right|_{s=0}  \\
   =&
     \int_{\Sigma}
   \rho f \left( -\Delta_{\Sigma} f - 
   \left( |h_{\Sigma}|^2 + \Ric_N(\nu, \nu) \right) f
   + (D_N^2 \log \rho)(\nu, \nu) f
   - 
   \langle D_{\Sigma} \log \rho, D_{\Sigma} f \rangle
   \right) 
   \, d\mu.
  \end{align*}
\end{proof}

 For a constant weight we recover the usual second variation formula for minimal hypersurfaces:
  \begin{align*}
    \left. \frac{d^2}{ds^2} \mathcal{H}^{n-1}(\Sigma_s)  \right|_{s=0} 
   =&
     \int_{\Sigma}
    \left( -f \Delta_{\Sigma} f - 
   \left( |h_{\Sigma}|^2 + \Ric_N(\nu, \nu) \right) f^2
   \right) 
   \, d\mu.
  \end{align*}

\section{Properties of stable weighted slicings}
\label{Section:WeightedSlicings}

Let $(N^n,g)$ be a closed and orientable Riemannian manifold of dimension $\dim N = n$. Throughout this section, we assume that we are given a stable weighted slicing of order $m$. Our goal is to show that the metric $g$ cannot have positive $m$-intermediate curvature.
 
 By the first variation formula for weighted area, Corollary
 \ref{Corollary:FirstVariation_WeightedArea},
 the mean curvature $H_{\Sigma_k}$ of the slice $\Sigma_{k}$ in 
 the manifold $\Sigma_{k-1}$
 satisfies for $1 \leq k \leq m$ the relation 
 \[
  H_{\Sigma_k} = - \langle D_{\Sigma_{k-1}} \log \rho_{k-1}, \nu_k \rangle.
 \]

 By the second variation formula for weighted area 
 (compare Proposition \ref{Proposition:SecondVariationArea-Weight-CriticalPoints})
 we obtain for $1 \leq k \leq m$
 the inequality
 \begin{align*}
  0 \leq& \int_{\Sigma_k}
  \rho_{k-1}
  \left(
  - \psi \Delta_{\Sigma_k} \psi
  - \psi \langle D_{\Sigma_k} \log \rho_{k-1}, D_{\Sigma_k}
  \psi \rangle
  \right) \, d\mu \\
  &- 
  \int_{\Sigma_k}
  \rho_{k-1}
  \left( 
  |h_{\Sigma_k}|^2 + \Ric_{\Sigma_{k-1}}(\nu_k, \nu_k)
  - (D_{\Sigma_{k-1}}^2 \log \rho_{k-1})(\nu_k, \nu_k)
  \right) \psi^2
  \, d\mu
 \end{align*}
 for all $\psi \in C^{\infty}(\Sigma_k)$. By Definition \ref{Definition: stable weighted slicing} we may write $\rho_k = \rho_{k-1} \, v_k$, where $v_k > 0$ is the first eigenfunction of the stability operator for the weighted area functional. The function $v_k$ satisfies
   \begin{align*}
  \lambda_k v_k 
  =&
   - \Delta_{\Sigma_k} v_k 
   - \langle D_{\Sigma_k} \log \rho_{k-1}, D_{\Sigma_k} v_k \rangle
  - \left( 
  |h_{\Sigma_k}|^2 + \Ric_{\Sigma_{k-1}}(\nu_k, \nu_k) \right) v_k \\
  &+ (D_{\Sigma_{k-1}}^2 \log \rho_{k-1})(\nu_k, \nu_k)
   v_k,
 \end{align*}
where $\lambda_k \geq 0$ denotes the first eigenvalue of the stability operator.  

 By setting $w_k = \log v_k$ we record the following equation:
 \begin{equation}
 \begin{aligned}
\lambda_k =& - \Delta_{\Sigma_k} w_k - \langle D_{\Sigma_k} \log \rho_{k-1}, D_{\Sigma_k} w_k \rangle - \left ( 
  |h_{\Sigma_k}|^2 + \Ric_{\Sigma_{k-1}}(\nu_k, \nu_k) \right ) \\
  &+ (D_{\Sigma_{k-1}}^2 \log \rho_{k-1})(\nu_k, \nu_k)
  - |D_{\Sigma_k}  w_k|^2.
  \end{aligned}
  \label{equation:LograrithmicWeights}
\end{equation}
 
 We next record two lemmata connecting the second derivatives on consecutive slices.
 \begin{lemma}[First slicing identity] \ \\
We have for $1 \leq k \leq m$ the identiy
\begin{align*}
 \Delta_{\Sigma_k} \log \rho_{k-1} + (D_{\Sigma_{k-1}}^2 \log \rho_{k-1}) (\nu_k, \nu_k)
 =
 \Delta_{\Sigma_{k-1}} \log \rho_{k-1} + H_{\Sigma_k}^2.
\end{align*}
\label{Lemma:FirstSlicingEquality}
\end{lemma}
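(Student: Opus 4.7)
The plan is to derive the identity by computing $\Delta_{\Sigma_{k-1}} \log \rho_{k-1}$ in a frame adapted to $\Sigma_k \subset \Sigma_{k-1}$, then converting to $\Delta_{\Sigma_k} \log \rho_{k-1}$ using the Gauss formula, and finally eliminating the leftover mean curvature term via the critical point relation of Corollary \ref{Corollary:FirstVariation_WeightedArea}.

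Concretely, I would fix a point $p \in \Sigma_k$ and choose a local orthonormal frame $\{e_1, \dots, e_{n-k}\}$ of $T\Sigma_k$ which, together with $\nu_k$, forms an orthonormal frame of $T\Sigma_{k-1}$. Writing the ambient Laplacian as a trace of its Hessian in this frame gives
\[
\Delta_{\Sigma_{k-1}} \log \rho_{k-1} = \sum_{i=1}^{n-k} (D_{\Sigma_{k-1}}^2 \log \rho_{k-1})(e_i, e_i) + (D_{\Sigma_{k-1}}^2 \log \rho_{k-1})(\nu_k, \nu_k).
\]
I would then apply the Gauss formula $D_{e_i}^{\Sigma_{k-1}} e_i = D_{e_i}^{\Sigma_k} e_i - h_{\Sigma_k}(e_i, e_i)\,\nu_k$ to each tangential Hessian (the sign matching the paper's convention, in which $H_\Sigma$ appears with a plus sign in the first variation formula). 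This produces
\[
(D_{\Sigma_{k-1}}^2 \log \rho_{k-1})(e_i, e_i) = (D_{\Sigma_k}^2 \log \rho_{k-1})(e_i, e_i) + h_{\Sigma_k}(e_i, e_i)\,\langle D_{\Sigma_{k-1}} \log \rho_{k-1}, \nu_k \rangle,
\]
and summing over $i$ replaces the tangential sum with $\Delta_{\Sigma_k} \log \rho_{k-1} + H_{\Sigma_k}\,\langle D_{\Sigma_{k-1}} \log \rho_{k-1}, \nu_k \rangle$.

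The last step is to invoke Corollary \ref{Corollary:FirstVariation_WeightedArea}, which gives $\langle D_{\Sigma_{k-1}} \log \rho_{k-1}, \nu_k \rangle = -H_{\Sigma_k}$ because $\Sigma_k$ is a critical point of the $\rho_{k-1}$-weighted area inside $\Sigma_{k-1}$. Substituting converts the cross term into $-H_{\Sigma_k}^2$; moving the normal Hessian to the left-hand side then yields the claimed identity.

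The computation is essentially routine and I do not foresee a serious obstacle. The one point that needs care is bookkeeping the signs of $h_{\Sigma_k}$ and $H_{\Sigma_k}$ so as to stay consistent with the first variation formula recorded in Section \ref{Section:Preliminaries}; an inconsistent choice would flip the sign of the $H_{\Sigma_k}^2$ term on the right-hand side.
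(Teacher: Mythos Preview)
Your proposal is correct and follows essentially the same route as the paper: the paper simply quotes the standard identity $\Delta_{\Sigma_k} f + (D_{\Sigma_{k-1}}^2 f)(\nu_k,\nu_k) = \Delta_{\Sigma_{k-1}} f - H_{\Sigma_k}\langle D_{\Sigma_{k-1}} f,\nu_k\rangle$ applied to $f=\log\rho_{k-1}$, whereas you derive that identity via the frame/Gauss--formula computation, and both then invoke Corollary~\ref{Corollary:FirstVariation_WeightedArea} to convert the cross term into $H_{\Sigma_k}^2$.
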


\begin{proof}
 The above formula follows by 
 applying
 the formula relating 
the Laplace operator on a submanifold
to the Laplace operator on the ambient space
\begin{align*}
 \Delta_{\Sigma_k} f + (D_{\Sigma_{k-1}}^2 f) (\nu_k, \nu_k)
 =
 \Delta_{\Sigma_{k-1}} f - H_{\Sigma_k} 
 \langle D_{\Sigma_{k-1}} f, \nu_k \rangle.
\end{align*}
to the function $f = \log \rho_{k-1}$.
The gradient term on the right-hand side is rewritten
by using the first variation formula
for weighted area 
 \[
  H_{\Sigma_k} = - \langle D_{\Sigma_{k-1}} \log \rho_{k-1}, \nu_k \rangle.
 \]
from Corollary \ref{Corollary:FirstVariation_WeightedArea}.
\end{proof}
 
 \begin{lemma}[Second slicing identity] \ \\
We have for $1 \leq k \leq m-1$
the identity
\begin{align*}
 \Delta_{\Sigma_{k}} \log \rho_{k}
  =& \Delta_{\Sigma_{k}} \log \rho_{k-1}
 +
   (D_{\Sigma_{k-1}}^2 \log \rho_{k-1})(\nu_k, \nu_k) \\
 &-
 \left(
 \lambda_{k}
 + |h_{\Sigma_{k}}|^2
 + \Ric_{\Sigma_{k-1}}(\nu_k, \nu_k)
 +
\langle D_{\Sigma_{k}} \log \rho_{k}, D_{\Sigma_{k}} w_{k} \rangle
 \right).
\end{align*}
\label{Lemma:SecondSlicingEquality}
\end{lemma}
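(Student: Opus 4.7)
The plan is to combine the multiplicative decomposition $\rho_k = \rho_{k-1} v_k$ from Definition \ref{Definition: stable weighted slicing} with the eigenvalue equation \eqref{equation:LograrithmicWeights} for $w_k = \log v_k$. Taking logarithms along $\Sigma_k$ yields $\log \rho_k = \log \rho_{k-1} + w_k$, and applying $\Delta_{\Sigma_k}$ gives
$$\Delta_{\Sigma_k} \log \rho_k = \Delta_{\Sigma_k} \log \rho_{k-1} + \Delta_{\Sigma_k} w_k.$$
This reduces the claim to computing $\Delta_{\Sigma_k} w_k$ in the desired form.

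For this I would rearrange \eqref{equation:LograrithmicWeights} to solve for $\Delta_{\Sigma_k} w_k$, obtaining
$$\Delta_{\Sigma_k} w_k = -\lambda_k - |h_{\Sigma_k}|^2 - \Ric_{\Sigma_{k-1}}(\nu_k, \nu_k) + (D_{\Sigma_{k-1}}^2 \log \rho_{k-1})(\nu_k, \nu_k) - \langle D_{\Sigma_k} \log \rho_{k-1}, D_{\Sigma_k} w_k \rangle - |D_{\Sigma_k} w_k|^2,$$
and substitute back. Every term appearing in the target identity is then already present, except that the gradient contribution currently reads $\langle D_{\Sigma_k} \log \rho_{k-1}, D_{\Sigma_k} w_k \rangle + |D_{\Sigma_k} w_k|^2$, whereas the statement demands the single term $\langle D_{\Sigma_k} \log \rho_k, D_{\Sigma_k} w_k \rangle$.

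These two quantities agree. Differentiating $\log \rho_k = \log \rho_{k-1} + w_k$ intrinsically on $\Sigma_k$ gives $D_{\Sigma_k} \log \rho_k = D_{\Sigma_k} \log \rho_{k-1} + D_{\Sigma_k} w_k$, and pairing with $D_{\Sigma_k} w_k$ produces exactly the identification needed to conclude. There is no real obstacle in this derivation; it is essentially an algebraic substitution combined with one product-rule identity. The only subtle point to keep track of is that every gradient and Laplacian in the target formula is intrinsic to $\Sigma_k$, so it is the restricted function $\log \rho_{k-1}|_{\Sigma_k}$ (not the ambient one) that enters the last step — precisely as provided by the equation \eqref{equation:LograrithmicWeights} derived in the preceding paragraphs.
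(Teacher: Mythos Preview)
Your proof is correct and follows exactly the same approach as the paper's: use $\log \rho_k = \log \rho_{k-1} + w_k$ together with equation \eqref{equation:LograrithmicWeights}. The paper's proof is a single sentence to this effect; you have simply written out the substitution and the gradient identification $\langle D_{\Sigma_k} \log \rho_{k-1}, D_{\Sigma_k} w_k \rangle + |D_{\Sigma_k} w_k|^2 = \langle D_{\Sigma_k} \log \rho_k, D_{\Sigma_k} w_k \rangle$ explicitly.
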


\begin{proof}
This follows from the identity $\log \rho_k = w_k + \log \rho_{k-1}$ together with the equation \eqref{equation:LograrithmicWeights}.
\end{proof}

\begin{lemma}[Stability inequality on the bottom slice] \ \\
 On the bottom slice $\Sigma_m$
 we have the inequality
 \begin{align*}
  \int_{\Sigma_m}
  \rho_{m-1}^{-1}
  \left(
  \Delta_{\Sigma_{m-1}} \log \rho_{m-1}
  + H_{\Sigma_m}^2 \right) d\mu \geq
\int_{\Sigma_m} \rho_{m-1}^{-1}  \left(
|h_{\Sigma_{m}}|^2 + \Ric_{\Sigma_{m-1}}(\nu_{m}, \nu_{m})
  \right) 
  \, d\mu.
\end{align*}
\label{Lemma:StabilityInequality_BottomSlice}
\end{lemma}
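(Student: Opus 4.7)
The plan is to apply the stability inequality on $\Sigma_m$ (i.e., the stability displayed just before equation \eqref{equation:LograrithmicWeights} with $k=m$) with the test function
\[
\psi = \rho_{m-1}^{-1},
\]
and then use the first slicing identity (Lemma \ref{Lemma:FirstSlicingEquality}) with $k = m$ to rewrite the resulting expression.

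First I would carry out the pointwise computation of the Dirichlet-type integrand. Writing $\psi = \rho_{m-1}^{-1}$, one has $D_{\Sigma_m}\psi = -\psi \, D_{\Sigma_m}\log\rho_{m-1}$, and a direct calculation (using $\Delta_{\Sigma_m}\log\rho_{m-1} = \rho_{m-1}^{-1}\Delta_{\Sigma_m}\rho_{m-1} - |D_{\Sigma_m}\log\rho_{m-1}|^2$) gives
\[
-\psi\,\Delta_{\Sigma_m}\psi = \psi^2\bigl(\Delta_{\Sigma_m}\log\rho_{m-1} - |D_{\Sigma_m}\log\rho_{m-1}|^2\bigr),
\]
\[
-\psi\,\langle D_{\Sigma_m}\log\rho_{m-1},D_{\Sigma_m}\psi\rangle = \psi^2\,|D_{\Sigma_m}\log\rho_{m-1}|^2.
\]
Adding these two identities and multiplying by $\rho_{m-1}$ yields
\[
\rho_{m-1}\bigl(-\psi\Delta_{\Sigma_m}\psi - \psi\langle D_{\Sigma_m}\log\rho_{m-1},D_{\Sigma_m}\psi\rangle\bigr) = \rho_{m-1}^{-1}\,\Delta_{\Sigma_m}\log\rho_{m-1}.
\]

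Plugging this into the stability inequality, and noting that $\rho_{m-1}\psi^2 = \rho_{m-1}^{-1}$, I obtain
\begin{align*}
0 \leq \int_{\Sigma_m} \rho_{m-1}^{-1}\,\Delta_{\Sigma_m}\log\rho_{m-1}\,d\mu - \int_{\Sigma_m}\rho_{m-1}^{-1}\Bigl(|h_{\Sigma_m}|^2 + \Ric_{\Sigma_{m-1}}(\nu_m,\nu_m) - (D_{\Sigma_{m-1}}^2\log\rho_{m-1})(\nu_m,\nu_m)\Bigr)d\mu.
\end{align*}
Grouping the two Hessian/Laplacian terms of $\log\rho_{m-1}$ together and applying Lemma \ref{Lemma:FirstSlicingEquality} with $k=m$ converts them into $\Delta_{\Sigma_{m-1}}\log\rho_{m-1} + H_{\Sigma_m}^2$, which after rearrangement is exactly the claimed inequality.

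I do not anticipate any real obstacle: the content of the lemma is simply the observation that the seemingly natural test function $\psi = \rho_{m-1}^{-1}$ produces a Dirichlet term that can be integrated by parts into $\rho_{m-1}^{-1}\Delta_{\Sigma_m}\log\rho_{m-1}$, after which Lemma \ref{Lemma:FirstSlicingEquality} converts the remaining curvature data on $\Sigma_m$ into curvature data on $\Sigma_{m-1}$ plus the square of the mean curvature. The only point requiring a little care is bookkeeping the sign in the Hessian term coming from the stability inequality, which, when combined with the $\Delta_{\Sigma_m}\log\rho_{m-1}$ term, produces precisely the left-hand side of the first slicing identity.
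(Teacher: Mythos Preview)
Your proposal is correct and follows essentially the same approach as the paper: choose the test function $\psi=\rho_{m-1}^{-1}$ in the stability inequality on $\Sigma_m$, observe that the gradient terms cancel to leave $\rho_{m-1}^{-1}\,\Delta_{\Sigma_m}\log\rho_{m-1}$, and then apply the first slicing identity (Lemma~\ref{Lemma:FirstSlicingEquality}) with $k=m$ to obtain the stated inequality. The only cosmetic difference is that the paper writes the intermediate computations in terms of $\rho_{m-1}^{-1}$ and $\rho_{m-1}^{-3}|D_{\Sigma_m}\rho_{m-1}|^2$ rather than $\psi^2|D_{\Sigma_m}\log\rho_{m-1}|^2$.
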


\begin{proof}
By the second variation of weighted area
(compare Proposition \ref{Proposition:SecondVariationArea-Weight-CriticalPoints}) the stability inequality on the bottom slice $\Sigma_m$ gives
\begin{align*}
  0 \leq& \int_{\Sigma_m}
  \rho_{m-1}
  \left(
  - \psi \Delta_{\Sigma_m} \psi
  - \psi \langle D_{\Sigma_m} \log \rho_{m-1}, D_{\Sigma_m}
  \psi \rangle
  \right) \, d\mu \\
&-
\int_{\Sigma_m}
\rho_{m-1}
  \left(|h_{\Sigma_m}|^2 + \Ric_{\Sigma_{m-1}}(\nu_{m}, \nu_{m})
  - (D_{\Sigma_{m-1}}^2 \log \rho_{m-1})(\nu_{m}, \nu_{m})
  \right) \psi^2
  \, d\mu
 \end{align*}
for all $\psi \in C^{\infty}(\Sigma_m)$.
 Since the weight $\rho_{m-1}$ is positive, we may
 use the direction $\psi = \rho_{m-1}^{-1}$
 in the stability inequality, and observe
 \begin{align*}
  -  \Delta_{\Sigma_m} \psi
  &=
  -\Delta_{\Sigma_m} \, \rho_{m-1}^{-1}
  = \rho_{m-1}^{-1} \Delta_{\Sigma_m} \log \rho_{m-1}
  - \rho_{m-1}^{-3} |D_{\Sigma_m} \rho_{m-1}|^2, \\
   - \langle D_{\Sigma_m} \log \rho_{m-1}, D_{\Sigma_m}
  \psi \rangle
  &=
  - 
  \langle D_{\Sigma_m} \log \rho_{m-1}, D_{\Sigma_m}
  \rho_{m-1}^{-1}
  \rangle
  = 
  \rho_{m-1}^{-3} |D_{\Sigma_m} \rho_{m-1}|^2.
 \end{align*}
 The gradient terms in the previous formulae cancel,
 and we obtain by rearrangement
  \begin{align*}
  &\int_{\Sigma_m}
  \rho_{m-1}^{-1}
  \left(
  \Delta_{\Sigma_m} \log \rho_{m-1}
+ (D_{\Sigma_{m-1}}^2 \log \rho_{m-1})(\nu_{m}, \nu_{m}) \right) d\mu\\
\geq &\int_{\Sigma_m} \rho_{m-1}^{-1}  \left(
|h_{\Sigma_{m}}|^2 + \Ric_{\Sigma_{m-1}}(\nu_{m}, \nu_{m})
  \right) 
  \, d\mu.
\end{align*}
 Finally, we use the first slicing equality from Lemma
 \ref{Lemma:FirstSlicingEquality} to replace
 \[
  \Delta_{\Sigma_m} \log \rho_{m-1}
  + (D_{\Sigma_{m-1}}^2 \log \rho_{m-1})(\nu_m, \nu_m)
  =
  \Delta_{\Sigma_{m-1}} \log \rho_{m-1}
  + H_{\Sigma_m}^2.
 \]
\end{proof}

\begin{lemma}[Main inequality]  \ \\
We have the inequality
\begin{align*}
    \int_{\Sigma_m} \rho_{m-1}^{-1}
    \left( 
    \Lambda + \mathcal{R} + \mathcal{E} + \mathcal{G}
    \right) \, d\mu \leq 0,
\end{align*}
where the eigenvalue term $\Lambda$,
the intrinsic curvature term $\mathcal{R}$,
the extrinsic curvature term $\mathcal{E}$,
and the gradient term $\mathcal{G}$ are given by
\begin{align*}
\Lambda
&=\sum_{k=1}^{m-1} \lambda_k, \;
\mathcal{R}
=\sum_{k=1}^m\Ric_{\Sigma_{k-1}}(\nu_k,\nu_k), \;
\mathcal{G}
=\sum_{k=1}^{m-1}
\langle
 D_{\Sigma_k} \log \rho_k, D_{\Sigma_k} w_k
\rangle, \\
\; \text{and} \; \;
\mathcal{E}
&=\sum_{k=1}^m |h_{\Sigma_k}|^2 - \sum_{k=2}^m H_{\Sigma_k}^2.
\end{align*}
\label{Lemma:MainInequality} 
\end{lemma}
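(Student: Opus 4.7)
My plan is to combine the stability inequality on the bottom slice with a telescoping sum derived from the two slicing identities proved immediately before the statement.

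The starting point is Lemma \ref{Lemma:StabilityInequality_BottomSlice}, which, after moving everything to one side, reads
\begin{align*}
\int_{\Sigma_m}\rho_{m-1}^{-1}\bigl(\Delta_{\Sigma_{m-1}}\log\rho_{m-1}+H_{\Sigma_m}^2-|h_{\Sigma_m}|^2-\Ric_{\Sigma_{m-1}}(\nu_m,\nu_m)\bigr)\,d\mu\geq 0.
\end{align*}
The key idea is then to recognize $\Delta_{\Sigma_{m-1}}\log\rho_{m-1}$ as the final term in a telescoping sum, since $\rho_0=1$ gives $\log\rho_0=0$ and hence
\begin{align*}
\Delta_{\Sigma_{m-1}}\log\rho_{m-1}=\sum_{k=1}^{m-1}\bigl(\Delta_{\Sigma_k}\log\rho_k-\Delta_{\Sigma_{k-1}}\log\rho_{k-1}\bigr).
\end{align*}

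Next, I will evaluate each summand by combining the two slicing identities. Substituting Lemma \ref{Lemma:FirstSlicingEquality} into Lemma \ref{Lemma:SecondSlicingEquality} to eliminate the Hessian term $(D_{\Sigma_{k-1}}^2\log\rho_{k-1})(\nu_k,\nu_k)$ gives, for every $1\leq k\leq m-1$, the clean relation
\begin{align*}
\Delta_{\Sigma_k}\log\rho_k-\Delta_{\Sigma_{k-1}}\log\rho_{k-1}=H_{\Sigma_k}^2-\lambda_k-|h_{\Sigma_k}|^2-\Ric_{\Sigma_{k-1}}(\nu_k,\nu_k)-\langle D_{\Sigma_k}\log\rho_k,D_{\Sigma_k}w_k\rangle.
\end{align*}
Plugging the resulting expansion for $\Delta_{\Sigma_{m-1}}\log\rho_{m-1}$ back into the stability inequality collects the $\lambda_k$ into $\Lambda$, the Ricci contributions into $\mathcal{R}$, the $|h_{\Sigma_k}|^2$ and $H_{\Sigma_k}^2$ into (the bulk of) $\mathcal{E}$, and the cross terms into $\mathcal{G}$.

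The only subtlety I expect is bookkeeping on the index range in the $H^2$ sum. The telescoping produces $H_{\Sigma_k}^2$ for $1\leq k\leq m-1$ together with the separate $H_{\Sigma_m}^2$ coming from the stability inequality, for a total over $1\leq k\leq m$. However, the definition of $\mathcal{E}$ subtracts $H_{\Sigma_k}^2$ only for $2\leq k\leq m$; this is consistent because the first variation formula from Corollary \ref{Corollary:FirstVariation_WeightedArea} together with $\rho_0=1$ forces $H_{\Sigma_1}=-\langle D_N\log\rho_0,\nu_1\rangle=0$, so the $k=1$ term is harmless. After this observation, a direct rearrangement produces the stated inequality, which is the content of the lemma.
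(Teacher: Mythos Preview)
Your proposal is correct and follows essentially the same approach as the paper's proof: combine the two slicing identities into a single relation for $\Delta_{\Sigma_k}\log\rho_k-\Delta_{\Sigma_{k-1}}\log\rho_{k-1}$, telescope from $k=1$ to $m-1$, insert the result into the stability inequality on $\Sigma_m$, and use $\rho_0\equiv 1$ together with $H_{\Sigma_1}=0$ to match the index ranges in $\mathcal{E}$. The only difference is cosmetic ordering of the steps.
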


\begin{proof}
If we substitute the first slicing equality,
Lemma \ref{Lemma:FirstSlicingEquality},
into the second slicing equality, 
Lemma \ref{Lemma:SecondSlicingEquality},
we obtain for $1 \leq k \leq m-1$
the identity
\begin{align*}
    \Delta_{\Sigma_k} \log \rho_k
 = \Delta_{\Sigma_{k-1}} \log \rho_{k-1}
 +
  H_{\Sigma_k}^2
  &-
 \left(
 \lambda_{k}
 + |h_{\Sigma_{k}}|^2
 + \Ric_{\Sigma_{k-1}}(\nu_k, \nu_k)
 +
\langle D_{\Sigma_{k}} \log \rho_{k}, D_{\Sigma_{k}} w_{k} \rangle
 \right).
\end{align*}

Summation of the above formula over $k$ from $1$ to $m-1$ yields 
\begin{align*}
    \Delta_{\Sigma_{m-1}} \log \rho_{m-1}
 =& \Delta_{\Sigma_0} \log \rho_0
 +\sum_{k=1}^{m-1} H_{\Sigma_k}^2  \\
 &-
 \sum_{k=1}^{m-1}
   \left(
  \lambda_{k}
  + |h_{\Sigma_{k}}|^2
  + \Ric_{\Sigma_{k-1}}(\nu_{k}, \nu_{k})
  + 
 \langle D_{\Sigma_{k}} \log \rho_{k}, D_{\Sigma_{k}} w_{k} \rangle
  \right).
\end{align*}

We plug this equation into the stability inequality,
Lemma \ref{Lemma:StabilityInequality_BottomSlice}.
Moreover, we observe that the weight $\rho_0$ is constant,
the mean curvature of the top slice $H_{\Sigma_1}$ vanishes,
and that the stability inequality contains the mean curvature 
term $H_{\Sigma_m}^2$
the extrinsic curvature term $|h_{\Sigma_m}|^2$
and the curvature term $\Ric_{\Sigma_{m-1}}(\nu_m, \nu_m)$.
Then the lemma follows by grouping the terms suitably.
\end{proof}

We consider two examples to illustrate the structure
of the curvature terms:
\begin{example}[Positive  Ricci curvature and $m=1$] \ \\
In the case $m=1$ we have the slicing $\Sigma_1 \subset \Sigma_0 = N^n$
and we recover the classic result on the instability
of minimal hypersurfaces in positive Ricci curvature $\Ric > 0$.
Indeed, we have $\Lambda = \mathcal{G} = 0$,
$\mathcal{E} = |h_{\Sigma_1}|^2$, and $\mathcal{R} = \Ric_{N}(\nu_1, \nu_1)$.
Thus $\mathcal{R} + \mathcal{E} > 0$.
Combined with the existence theory for stable weighted slicings from 
Section \ref{Section:ExistenceWeightedSlicings}
this implies the non-existence of metrics of positive Ricci curvature
on manifolds with topology $N^n = M^{n-1} \times \Sphere^1$ in dimension
$\dim N \leq 7$.
\end{example}

\begin{example}[Positive bi-Ricci curvature and $m=2$] \ \\
 In the case $m=2$ we have the slicing $\Sigma_2 \subset \Sigma_1 \subset \Sigma_0 = N^n$.
 We moreover observe $\Lambda = \lambda_1 \geq 0$,
 $\mathcal{G} = |D_{\Sigma_1} w_1|^2 \geq H_{\Sigma_2}^2$,
 and the curvature terms $\mathcal{E}$ and $\mathcal{R}$ are given by
\begin{align*}
 &\mathcal{E}
 = |h_{\Sigma_1}|^2 + |h_{\Sigma_2}|^2 - H_{\Sigma_2}^2, \\
 \; \text{and} \; 
 &\mathcal{R}
 =
 \Ric_{N}(\nu_1, \nu_1)
 + \Ric_{N}(\nu_2, \nu_2)
 - \Rm_{N}(\nu_1, \nu_2, \nu_1, \nu_2)
 - (h_{\Sigma_1}^2)(\nu_1, \nu_1).
\end{align*}
Thus if we assume positive bi-Ricci curvature we have
$ \Lambda + \mathcal{R} + \mathcal{E} + \mathcal{G} > 0$.
This shows a non-existence result for stable weighted slicings
of order two.
Combined with the existence theory for stable weighted slicings from Section \ref{Section:ExistenceWeightedSlicings}
this implies that a manifold with topology $N^n = M^{n-2} \times \Torus^2$
 (with $n \leq 7$) does not admit a metric of positive bi-Ricci curvature.
\end{example}

The eigenvalue term $\Lambda$ is non-negative, 
since it is the sum of the non-negative eigenvalues.
We will estimate the other terms below.

The first step is to estimate the gradient terms:

\begin{lemma}[Estimate of gradient terms] \ \\
We have the estimate
\begin{align*}
    \mathcal{G} \geq \sum_{k=2}^m
    \left(
    \frac{1}{2}+\frac{1}{2(k-1)}
    \right)
    H_{\Sigma_k}^2.
\end{align*}
\label{Lemma:Estimate-GradientTerms}
\end{lemma}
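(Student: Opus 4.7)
The plan is to work pointwise at an arbitrary $p \in \Sigma_m$ and exploit the orthogonal decomposition
\[
T_p N = T_p\Sigma_m \oplus \bigoplus_{j=1}^{m} \R\,\nu_j,
\]
which follows from nesting $T_p\Sigma_m \subset \dots \subset T_pN$ (the $\nu_j$ being automatically orthonormal). Writing $Y_k := D_{\Sigma_k}\log\rho_k$ and $X_k := D_{\Sigma_k} w_k$, the relation $\log\rho_k = \log\rho_{k-1}+w_k$ on $\Sigma_k$ (with $\rho_0 = 1$, so $Y_0 = 0$) gives the recursion $Y_k = Y_{k-1}^{\top} + X_k$, where $Y_{k-1}^{\top}$ denotes the $T\Sigma_k$-component of $Y_{k-1}|_{\Sigma_k}$. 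Set $y_k^{(j)} := \langle Y_k,\nu_j\rangle$ and $x_k^{(j)} := \langle X_k,\nu_j\rangle$ for $k+1 \leq j \leq m$, and let $Y_k^T, X_k^T \in T_p\Sigma_m$ denote the $T_p\Sigma_m$-components. The recursion splits into $Y_k^T = Y_{k-1}^T + X_k^T$ and $y_k^{(j)} = y_{k-1}^{(j)} + x_k^{(j)}$, while Corollary~\ref{Corollary:FirstVariation_WeightedArea} supplies the crucial boundary value
\[
y_{j-1}^{(j)} = \langle Y_{j-1},\nu_j\rangle = -H_{\Sigma_j}.
\]

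Correspondingly, $\langle Y_k,X_k\rangle = \langle Y_k^T,X_k^T\rangle + \sum_{j=k+1}^{m} y_k^{(j)} x_k^{(j)}$. For the $T_p\Sigma_m$-tangential contribution, the polarization identity $\langle Y_k^T,X_k^T\rangle = \tfrac12|Y_k^T|^2 + \tfrac12|X_k^T|^2 - \tfrac12|Y_{k-1}^T|^2$ telescopes (using $Y_0^T = 0$) to
\[
\sum_{k=1}^{m-1}\langle Y_k^T,X_k^T\rangle = \tfrac12|Y_{m-1}^T|^2 + \tfrac12\sum_{k=1}^{m-1}|X_k^T|^2 \geq 0,
\]
so this part can be discarded for the lower bound.

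For the normal part, swap the order of summation to isolate, for each $j \in \{2,\dots,m\}$, the quantity $S_j := \sum_{k=1}^{j-1} y_k^{(j)} x_k^{(j)}$. Setting $a_k := y_k^{(j)}$ (so $a_0 = 0$, $a_{j-1} = -H_{\Sigma_j}$, and $x_k^{(j)} = a_k - a_{k-1}$), the identity $2a_k(a_k - a_{k-1}) = (a_k - a_{k-1})^2 + a_k^2 - a_{k-1}^2$ telescopes to
\[
2S_j = \sum_{k=1}^{j-1}(a_k - a_{k-1})^2 + H_{\Sigma_j}^2.
\]
By Cauchy--Schwarz applied to the telescoping sum $\sum_{k=1}^{j-1}(a_k-a_{k-1}) = -H_{\Sigma_j}$, we have $\sum_{k=1}^{j-1}(a_k-a_{k-1})^2 \geq H_{\Sigma_j}^2/(j-1)$, hence $S_j \geq \bigl(\tfrac12 + \tfrac{1}{2(j-1)}\bigr) H_{\Sigma_j}^2$. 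Summing over $j = 2,\dots,m$ and relabeling $j \to k$ yields the claim.

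The only conceptual hurdle is recognizing that, after decomposition in the natural orthonormal frame $\nu_1,\dots,\nu_m$, the gradient term $\mathcal{G}$ decouples into independent one-dimensional discrete variational problems on the sequences $(a_k)_{k=0}^{j-1}$ with prescribed endpoints $0$ and $-H_{\Sigma_j}$. The sharp constant $\tfrac{j}{2(j-1)}$ is attained on the linear minimizer $a_k = -k H_{\Sigma_j}/(j-1)$, and Cauchy--Schwarz captures exactly this extremum; beyond the first variation formula, no further geometric identity is invoked.
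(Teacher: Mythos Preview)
Your proof is correct and takes a genuinely different route from the paper's.

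The paper argues as follows: it introduces weights $\alpha_k = \tfrac{k-1}{2k}$ satisfying $1-\alpha_{k-1} = \tfrac{1}{4\alpha_k}$, completes the square in
\[
\langle D_{\Sigma_k}\log\rho_k,\, D_{\Sigma_k}w_k\rangle
= (1-\alpha_k)\,|D_{\Sigma_k}\log\rho_k|^2 - \tfrac{1}{4\alpha_k}\,|D_{\Sigma_k}\log\rho_{k-1}|^2 + \alpha_k\,\bigl|D_{\Sigma_k}\log\rho_k - \tfrac{1}{2\alpha_k}D_{\Sigma_k}\log\rho_{k-1}\bigr|^2,
\]
splits $|D_{\Sigma_k}\log\rho_k|^2 = H_{\Sigma_{k+1}}^2 + |D_{\Sigma_{k+1}}\log\rho_k|^2$, and observes that the remaining terms telescope. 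The ``magic'' sequence $\alpha_k$ is chosen precisely to make the telescoping work, and the sharp coefficients $1-\alpha_{k} = \tfrac{k+1}{2k}$ emerge from that choice.

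Your argument instead fully diagonalizes the problem: you decompose each $Y_k$ along $T_p\Sigma_m$ and the individual normal directions $\nu_{k+1},\dots,\nu_m$, so that $\mathcal{G}$ splits as a nonnegative tangential piece plus $m-1$ completely decoupled one-dimensional sums $S_j$, one for each $\nu_j$. Each $S_j$ is then recognized as (half of) a discrete Dirichlet energy with fixed endpoints $0$ and $-H_{\Sigma_j}$, and Cauchy--Schwarz gives the minimum $\tfrac{j}{2(j-1)}H_{\Sigma_j}^2$ directly, with the linear interpolant as the extremal. This is more transparent: the constant $\tfrac12+\tfrac{1}{2(j-1)}$ acquires a clear variational meaning, and no clever ansatz for auxiliary weights is needed. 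The paper's approach, on the other hand, never breaks the vectors into components and works entirely with full gradients; this keeps the computation shorter on the page but hides why those particular constants arise.
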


\begin{proof}
We define for $k \geq 1$ the nonnegative real numbers $\alpha_k$ by 
\[
 \alpha_k = \frac{k-1}{2k}.
\]
By direct computation one verifies the identity
\begin{align*}
    1 - \alpha_{k-1} = \frac{1}{4 \alpha_k}
\end{align*}
for $k \geq 2$. Using the identity $H_{\Sigma_{k+1}}=-\langle D_{\Sigma_{k}} \log \rho_{k}, \nu_{k+1} \rangle$, we obtain 
\begin{align*}
 &\langle D_{\Sigma_k} \log \rho_k, D_{\Sigma_k} w_k \rangle \\ 
 =&
 \langle D_{\Sigma_k} \log \rho_k,
 D_{\Sigma_k} (\log \rho_k - \log \rho_{k-1}) \rangle \\
 =&
 (1 - \alpha_k) |D_{\Sigma_k} \log \rho_k|^2  - \frac{1}{4 \alpha_k}
 |D_{\Sigma_k} \log \rho_{k-1}|^2 \\ 
& +
 \alpha_k \, \left|
 D_{\Sigma_k} \log \rho_k
 - \frac{1}{2 \alpha_k} D_{\Sigma_k} \log \rho_{k-1}
 \right|^2 \\ 
=& (1 - \alpha_k) \, H_{\Sigma_{k+1}}^2 + (1 - \alpha_k) \, |D_{\Sigma_{k+1}} \log \rho_k|^2 - (1 - \alpha_{k-1}) \, |D_{\Sigma_k} \log \rho_{k-1}|^2  \\ 
&+
\alpha_k \, \left|
 D_{\Sigma_k} \log \rho_k
 - \frac{1}{2 \alpha_k} D_{\Sigma_k} \log \rho_{k-1}
 \right|^2 
\end{align*}
for $2 \leq k \leq m-1$. Summation over $k$ from $2$ to $m-1$ yields the formula 
\[ \sum_{k=2}^{m-1}
 \langle D_{\Sigma_k} \log \rho_k, D_{\Sigma_k} w_k \rangle \geq \sum_{k=2}^{m-1} (1-\alpha_k) \, H_{\Sigma_{k+1}}^2 + (1-\alpha_{m-1}) \, |D_{\Sigma_m} \log \rho_{m-1}|^2 - |D_{\Sigma_2} \log \rho_1|^2.\] 
Moreover, the identity $H_{\Sigma_2} = -\langle D_{\Sigma_1} \log \rho_1,\nu_2 \rangle$ implies 
\[\langle D_{\Sigma_1} \log \rho_1,D_{\Sigma_1} w_1 \rangle = |D_{\Sigma_1} \log \rho_1|^2 = H_{\Sigma_2}^2 + |D_{\Sigma_2} \log \rho_1|^2.\] 
Adding the two inequalities gives 
\[ \sum_{k=1}^{m-1}
 \langle D_{\Sigma_k} \log \rho_k, D_{\Sigma_k} w_k \rangle \geq \sum_{k=1}^{m-1} (1-\alpha_k) \, H_{\Sigma_{k+1}}^2 + (1-\alpha_{m-1}) \, |D_{\Sigma_m} \log \rho_{m-1}|^2.\] 
\end{proof}

In the next step we rewrite the intrinsic curvature
terms with the help of the Gauss equations:

\begin{lemma}[Iterated Gauss equations] \ \\
The curvature term $\mathcal{R}$ is given by
\begin{align*}
     \mathcal{R}
     = 
     \mathcal{C}_{m}(e_1, \dots, e_m)
     + 
  \sum_{k=1}^{m-1} \sum_{p = k+1}^{m} \sum_{q = p+1}^n
  \left(
   h_{\Sigma_k}(e_p, e_p) h_{\Sigma_p}(e_q, e_q)
   -
   h_{\Sigma_k}(e_p, e_q)^2 
   \right),
\end{align*}
where $\mathcal{C}_m$ denotes the $m$-intermediate curvature of the Riemannian manifold $(N^n,g)$.
\label{Lemma:IteratedGaussEquations}
\end{lemma}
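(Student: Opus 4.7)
The plan is to express each intrinsic Ricci contribution $\Ric_{\Sigma_{k-1}}(\nu_k,\nu_k)$ in terms of the ambient curvature tensor $\Rm_N$ by iterating the Gauss equation, and then to rearrange the summation indices so that the $m$-intermediate curvature emerges.

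First I would fix a point $p \in \Sigma_m$ and extend the unit normals to an adapted orthonormal basis by setting $e_k = \nu_k$ for $1 \leq k \leq m$ and letting $\{e_{m+1},\dots,e_n\}$ be any orthonormal basis of $T_p \Sigma_m$. With this choice, the tangent space $T_p \Sigma_{k-1}$ is spanned by $\{e_k,e_{k+1},\dots,e_n\}$, which immediately yields
\[
\Ric_{\Sigma_{k-1}}(\nu_k,\nu_k) \;=\; \sum_{j=k+1}^{n} \Rm_{\Sigma_{k-1}}(e_k,e_j,e_k,e_j).
\]

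Next I would apply the single-step Gauss equation recursively. Because both $e_k$ and $e_j$ (with $j > k$) lie in every intermediate tangent space $T_p \Sigma_i$ for $0 \leq i \leq k-1$, iterating the Gauss equation from $N = \Sigma_0$ down to $\Sigma_{k-1}$ gives
\[
\Rm_{\Sigma_{k-1}}(e_k,e_j,e_k,e_j) = \Rm_N(e_k,e_j,e_k,e_j) + \sum_{i=1}^{k-1} \bigl( h_{\Sigma_i}(e_k,e_k)\, h_{\Sigma_i}(e_j,e_j) - h_{\Sigma_i}(e_k,e_j)^2 \bigr).
\]
Summing over $k = 1,\dots,m$ and $j = k+1,\dots,n$, the ambient terms collect into $\mathcal{C}_m(e_1,\dots,e_m)$ by definition. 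Interchanging the order of summation in the remaining triple sum, so that the index $i$ (relabeled as the new outermost slicing variable $k$, with $p$ ranging from $k+1$ to $m$ and $q$ from $p+1$ to $n$) produces precisely the shape of the extrinsic correction stated in the lemma.

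The only step that genuinely requires care is the inductive Gauss computation: I would verify once that $e_k$ and $e_j$ with $j > k$ automatically belong to $T_p \Sigma_i$ for every $i \leq k-1$, which is immediate from $\nu_\ell \in T_p \Sigma_{\ell-1}$, and that at each step the Gauss equation may be applied with $X,Y,Z,W$ restricted to the deeper tangent space. Once this is in place, the proof reduces to bookkeeping of indices and no serious obstacle is expected.
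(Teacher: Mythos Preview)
Your proposal is correct and follows essentially the same approach as the paper: fix an adapted orthonormal frame with $e_k=\nu_k$, expand each $\Ric_{\Sigma_{k-1}}(\nu_k,\nu_k)$ as a sum of sectional curvatures, iterate the Gauss equation down to $N=\Sigma_0$, and then interchange the order of summation so that the extrinsic correction takes the stated form. The only cosmetic difference is your choice of index labels (and the use of $p$ for the basepoint, which clashes with the paper's summation index), but the argument itself is identical.
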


\begin{proof}
Fix a point $x \in \Sigma_m$
and consider an orthornomal basis $\{e_1, \dots, e_n\}$ of $T_x N$
with $e_j = \nu_j$ for $1 \leq j \leq m$ as above.
We observe by the definition of the Ricci curvature
on the slice $\Sigma_{k-1}$,
and by the Gauss equations the formula
 \begin{align*}
  \Ric_{\Sigma_{p-1}}(\nu_p, \nu_p)
  &= \Ric_{\Sigma_{p-1}}(e_p, e_p)
  =
  \sum_{q=p+1}^n \Rm_{\Sigma_{p-1}}(e_p, e_q, e_p, e_q) \\
  &=
  \sum_{q=p+1}^n 
  \Rm_{N}(e_p, e_q, e_p, e_q)
  +
  \sum_{q=p+1}^n 
  \sum_{k = 1}^{p-1} 
  \left( 
  h_{\Sigma_k}(e_p, e_p) h_{\Sigma_k}(e_q, e_q)
  - h_{\Sigma_k}(e_p, e_q)^2
  \right).
 \end{align*}
 
 Summation over $p$ from $1$ to $m$
 then implies
 \begin{align*}
  \mathcal{R}
  &=
  \sum_{p=1}^m \Ric_{\Sigma_{p-1}}(\nu_p, \nu_p) \\
  &=
  \sum_{p=1}^m \sum_{q=p+1}^n \Rm_N(e_p, e_q, e_p, e_q)
  +
  \sum_{p=1}^m \sum_{q=p+1}^n \sum_{k=1}^{p-1}
  \left(
   h_{\Sigma_k}(e_p, e_p) h_{\Sigma_k}(e_q, e_q)
   -
   h_{\Sigma_k}(e_p, e_q)^2 \right)\\
   &=
   \mathcal{C}_m(e_1, \dots, e_m)
   +
  \sum_{p=1}^m \sum_{q=p+1}^n \sum_{k=1}^{p-1}
  \left(
   h_{\Sigma_k}(e_p, e_p) h_{\Sigma_k}(e_q, e_q)
   -
   h_{\Sigma_k}(e_p, e_q)^2 
  \right).
 \end{align*}
 If we interchange the order of summation, the assertion follows.
\end{proof}

\begin{remark}[Observation on full slicing] \ \\
In the special case $m = n-1$ the curvature term $\mathcal{R}$ can be rewritten as 
\begin{align*}
     \mathcal{R}
     &= 
     \mathcal{C}_{n-1}(e_1, \dots, e_{n-1})
     + 
  \sum_{k=1}^{n-2} \sum_{p = k+1}^{n-1} \sum_{q = p+1}^n
  \left(
   h_{\Sigma_k}(e_p, e_p) h_{\Sigma_p}(e_q, e_q)
   -
   h_{\Sigma_k}(e_p, e_q)^2 
   \right) \\ 
&= \frac{1}{2} \scal_N + \frac{1}{2} \sum_{k=1}^{n-2} \left( H_{\Sigma_k}^2 - |h_{\Sigma_k}|^2 \right).
\end{align*}
(cf. Remark \ref{remark:ConnectionToOtherConditions} (iv)). Note that the mean curvature of the top slice $\Sigma_1$
vanishes, and that $H_{\Sigma_{n-1}}^2 = |h_{\Sigma_{n-1}}|^2$
since $\Sigma_{n-1}$ is one-dimensional. Therefore, for $m=n-1$ we obtain 
\begin{align*}
 \mathcal{R} + \mathcal{E} + \mathcal{G}
 &=
 \frac{1}{2} \scal_N 
 + \frac{1}{2} \sum_{k=1}^{n-1} |h_{\Sigma_k}|^2 - \frac{1}{2} \sum_{k=1}^{n-1}
 H_{\Sigma_k}^2 + \mathcal{G} \\
 &\geq
 \frac{1}{2} \scal_N + \frac{1}{2} \sum_{k=1}^{n-1} |h_{\Sigma_k}|^2
 + \sum_{k=2}^{n-1} \frac{1}{2(k-1)} H_{\Sigma_k}^2.
\end{align*}
In the last step we have used the estimate for the gradient terms
$\mathcal{G}$ from Lemma \ref{Lemma:Estimate-GradientTerms}.
Hence, we recover a similar result as in the computation
of R.~Schoen and S.-T.~Yau \cite{Schoen-Yau:2017:Structure_PositiveScalar_HigherDimensions}.
\end{remark}

In the next step we need to analyze the contributions
coming from the extrinsic curvature. 
We fix $m \in \{2, \dots, n-1\}$, and we define for $1 \leq k \leq m$ 
the extrinsic curvature terms $\mathcal{V}_k$:
\begin{align*}
 \mathcal{V}_1
 =&
  |h_{\Sigma_1}|^2
  + 
   \sum_{p = 2}^{m} \sum_{q = p+1}^n
  \left(
   h_{\Sigma_1}(e_p, e_p) h_{\Sigma_1}(e_q, e_q)
   -
   h_{\Sigma_1}(e_p, e_q)^2 
  \right)
   ,
  \\
  \mathcal{V}_k =&
  |h_{\Sigma_k}|^2
  -  \left( \frac{1}{2} - \frac{1}{2(k-1)} \right)
 H_{\Sigma_k}^2 \\
 &+
\sum_{p = k+1}^{m} \sum_{q = p+1}^n
  \left(
   h_{\Sigma_k}(e_p, e_p) h_{\Sigma_k}(e_q, e_q)
   -
   h_{\Sigma_k}(e_p, e_q)^2 
  \right)
   \; \text{for} \; 2 \leq k \leq m - 1, \\
   \mathcal{V}_m
   =&
   |h_{\Sigma_m}|^2
    -  \left( \frac{1}{2} - \frac{1}{2(m-1)} \right)
 H_{\Sigma_m}^2.
\end{align*}

By combining Lemma \ref{Lemma:Estimate-GradientTerms}, with Lemma \ref{Lemma:IteratedGaussEquations},
and the above expressions $\mathcal{V}_k$ for the extrinsic curvature terms, we obtain:
\begin{lemma} \ \\
For $2 \leq m \leq n-1$ we have the pointwise estimate
\begin{align*}
 \mathcal{R} + \mathcal{E} + \mathcal{G}
 \geq \mathcal{C}_{m}(e_1, \dots, e_m) 
 + \sum_{k=1}^m \mathcal{V}_k.
 \end{align*}
 \label{Lemma:Reduced-MainInequality}
\end{lemma}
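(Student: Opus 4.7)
The claim is a bookkeeping exercise: the three inequalities/identities established just above essentially line up term-by-term with the definitions of $\mathcal{V}_1,\dots,\mathcal{V}_m$, and the plan is to add them and regroup by slice index $k$.

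First I would substitute the iterated Gauss equation from Lemma \ref{Lemma:IteratedGaussEquations} for $\mathcal{R}$, which peels off the $m$-intermediate curvature $\mathcal{C}_m(e_1,\dots,e_m)$ and leaves the purely extrinsic triple sum
\[
\sum_{k=1}^{m-1} \sum_{p=k+1}^{m} \sum_{q=p+1}^n \bigl( h_{\Sigma_k}(e_p,e_p) h_{\Sigma_k}(e_q,e_q) - h_{\Sigma_k}(e_p,e_q)^2 \bigr).
\]
Next I would handle $\mathcal{E}+\mathcal{G}$ on its own. Inserting the estimate from Lemma \ref{Lemma:Estimate-GradientTerms} and the definition $\mathcal{E}=\sum_{k=1}^m |h_{\Sigma_k}|^2 - \sum_{k=2}^m H_{\Sigma_k}^2$, the $H_{\Sigma_k}^2$ coefficients combine as $-1 + \tfrac{1}{2} + \tfrac{1}{2(k-1)} = -\bigl(\tfrac{1}{2} - \tfrac{1}{2(k-1)}\bigr)$ for $2 \leq k \leq m$, giving
\[
\mathcal{E} + \mathcal{G} \geq \sum_{k=1}^m |h_{\Sigma_k}|^2 - \sum_{k=2}^m \Bigl( \tfrac{1}{2} - \tfrac{1}{2(k-1)} \Bigr) H_{\Sigma_k}^2.
\]

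Adding the two displays and reorganizing the triple sum by its outer index $k$, I would match the $k$-th block against the definition of $\mathcal{V}_k$: the $k=1$ block (where the $H_{\Sigma_1}^2$ contribution is absent, consistent with $\Sigma_1$ being minimal) matches $\mathcal{V}_1$; the blocks $2 \leq k \leq m-1$ pick up both the $|h_{\Sigma_k}|^2$, the $H_{\Sigma_k}^2$ coefficient, and the triple sum over $p,q$, matching $\mathcal{V}_k$; and the $k=m$ block has no triple sum (since $p$ would range over an empty set), matching $\mathcal{V}_m$.

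There is essentially no analytic obstacle: each piece has already been established, and the only risk is a clerical slip when aligning summation ranges. The one point to double-check carefully is that the coefficient of $H_{\Sigma_k}^2$ in $\mathcal{V}_k$ for $2 \leq k \leq m$ is exactly $-\bigl(\tfrac{1}{2} - \tfrac{1}{2(k-1)}\bigr)$ and that, after reordering, the triple sum in $\mathcal{R}$ contributes to $\mathcal{V}_k$ only for $k \leq m-1$, which is why $\mathcal{V}_m$ contains no Gauss-type cross terms. Once these matches are verified, the claimed pointwise inequality drops out.
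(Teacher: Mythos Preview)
Your proposal is correct and follows essentially the same approach as the paper, which simply states that the lemma follows by combining Lemma~\ref{Lemma:Estimate-GradientTerms}, Lemma~\ref{Lemma:IteratedGaussEquations}, and the definitions of the $\mathcal{V}_k$. Your write-up just makes the bookkeeping explicit, and the coefficient check $-1+\tfrac{1}{2}+\tfrac{1}{2(k-1)}=-\bigl(\tfrac{1}{2}-\tfrac{1}{2(k-1)}\bigr)$ and the observation that the triple sum only contributes for $k\leq m-1$ are exactly the verifications needed.
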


In the following lemmata we estimate
the extrinsic curvature terms $\mathcal{V}_k$. The estimate for $\mathcal{V}_m$ follows
from the trace estimate for symmetric two-tensors. The estimate for $\mathcal{V}_1$ uses minimality of the top slice $\Sigma_1$. The estimate for $\mathcal{V}_k$ with $2 \leq k \leq m-1$ is the most involved.

\begin{lemma}[Extrinsic curvature terms on top slice] \ \\
For $2 \leq m \leq n-1$ we have the estimate
\begin{align*}
    \mathcal{V}_1
    \geq \frac{m^2-2-n(m-2)}{2(n-m)(m-1)} \left ( 
    \sum_{p=2}^m h_{\Sigma_1}(e_p,e_p) \right )^2.
\end{align*}
\label{Lemma:ExtrinsicCurvature_TopSlice}
\end{lemma}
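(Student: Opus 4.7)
\textbf{Setup and notation.} Since the slicing starts with the constant weight $\rho_0 = 1$, the hypersurface $\Sigma_1 \subset N$ is a minimal hypersurface in the ordinary sense, so its mean curvature vanishes: $H_{\Sigma_1} = 0$. With the basis chosen as in Lemma \ref{Lemma:IteratedGaussEquations} (so that $e_1 = \nu_1$ and $\{e_2, \dots, e_n\}$ is an orthonormal basis of $T\Sigma_1$), I would abbreviate $a_p := h_{\Sigma_1}(e_p, e_p)$ for $2 \leq p \leq n$ and set $A := \sum_{p=2}^m a_p$. Minimality of $\Sigma_1$ then gives $\sum_{p=2}^n a_p = 0$, hence $\sum_{p=m+1}^n a_p = -A$, which is the crucial identity linking the "top" and "bottom" ranges.

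\textbf{Separating diagonal and off-diagonal parts.} The strategy is to expand
\[
|h_{\Sigma_1}|^2 = \sum_{p=2}^n a_p^2 + 2\sum_{2 \leq p < q \leq n} h_{\Sigma_1}(e_p,e_q)^2,
\]
and combine it with the sum $\sum_{p=2}^m \sum_{q=p+1}^n \bigl(a_p a_q - h_{\Sigma_1}(e_p,e_q)^2\bigr)$ appearing in the definition of $\mathcal{V}_1$. A direct bookkeeping check shows that the coefficient of each off-diagonal square $h_{\Sigma_1}(e_p,e_q)^2$ (with $p<q$) in the result is $1$ when $p \leq m$ and $2$ when $p > m$; in particular, both are non-negative. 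Discarding all off-diagonal squares therefore produces a valid lower bound for $\mathcal{V}_1$, and only the diagonal entries $a_p$ remain.

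\textbf{Reduction to a sum of squares in the $a_p$.} Splitting the double sum $\sum_{p=2}^m \sum_{q=p+1}^n a_p a_q$ according to whether $q \leq m$ or $q > m$, and using $\sum_{p=m+1}^n a_p = -A$, the "mixed" block contributes $A \cdot (-A) = -A^2$, while the "inner" block gives $\tfrac{1}{2}\bigl(A^2 - \sum_{p=2}^m a_p^2\bigr)$. Adding these to $\sum_{p=2}^n a_p^2$ yields the clean expression
\[
\mathcal{V}_1 \geq \frac{1}{2}\sum_{p=2}^m a_p^2 + \sum_{p=m+1}^n a_p^2 - \frac{1}{2} A^2.
\]

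\textbf{Applying Cauchy--Schwarz on each range.} Since $\{2, \dots, m\}$ has $m-1$ elements summing to $A$, and $\{m+1, \dots, n\}$ has $n-m$ elements summing to $-A$, Cauchy--Schwarz gives
\[
\sum_{p=2}^m a_p^2 \geq \frac{A^2}{m-1}, \qquad \sum_{p=m+1}^n a_p^2 \geq \frac{A^2}{n-m}.
\]
Substituting into the previous inequality and simplifying the scalar factor
\[
\frac{1}{2(m-1)} + \frac{1}{n-m} - \frac{1}{2} = \frac{(n-m) + 2(m-1) - (m-1)(n-m)}{2(m-1)(n-m)} = \frac{m^2 - 2 - n(m-2)}{2(m-1)(n-m)}
\]
gives the claimed estimate. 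I expect no serious obstacle here: the only subtle point is keeping the bookkeeping straight when combining the $|h_{\Sigma_1}|^2$ expansion with the double sum, and in particular verifying that all surviving off-diagonal coefficients are non-negative; once that is done the rest is arithmetic.
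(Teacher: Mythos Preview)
Your proof is correct and follows essentially the same approach as the paper: discard the off-diagonal terms of $h_{\Sigma_1}$ (whose net coefficients are nonnegative), rewrite the remaining diagonal expression as $\tfrac{1}{2}\sum_{p=2}^m a_p^2 + \sum_{p=m+1}^n a_p^2 - \tfrac{1}{2}A^2$, and then apply Cauchy--Schwarz on the two index ranges using the minimality constraint $\sum_{p=2}^n a_p = 0$. The only cosmetic difference is that the paper keeps $H_{\Sigma_1}$ explicit in the intermediate identity before setting it to zero, whereas you use $\sum_{p=m+1}^n a_p = -A$ from the outset; the computations are otherwise identical.
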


\begin{proof}
To estimate the term $\mathcal{V}_1$, we begin by discarding the off-diagonal terms of the second fundamental form $h_{\Sigma_1}$:
 \begin{align*}
\mathcal{V}_1
 =&
  |h_{\Sigma_1}|^2
  + 
   \sum_{p = 2}^{m} \sum_{q = p+1}^n
  \left(
   h_{\Sigma_1}(e_p, e_p) h_{\Sigma_1}(e_q, e_q)
   -
   h_{\Sigma_1}(e_p, e_q)^2 
  \right) \\
  \ge& 
   \sum_{p = 2}^n
 h_{\Sigma_1}(e_p, e_p)^2
   + 
   \sum_{p = 2}^{m} \sum_{q = p+1}^n
   h_{\Sigma_1}(e_p, e_p) h_{\Sigma_1}(e_q, e_q).
 \end{align*}
The terms on the right hand side can be rewritten as follows: 
 \[
 \mathcal{V}_1
  \ge 
\frac{1}{2} \sum_{p=2}^m h_{\Sigma_1}(e_p,e_p)^2 +  \sum_{q= m+1}^n
 h_{\Sigma_1}(e_q, e_q)^2
   + \sum_{p=2}^m h_{\Sigma_1}(e_p,e_p) \, H_{\Sigma_1} - \frac{1}{2} \left ( \sum_{p=2}^m h_{\Sigma_1}(e_p,e_p) \right )^2.
   \] 
Recall that $H_{\Sigma_1} = 0$. By the Cauchy--Schwarz inequality, 
\[ \sum_{p=2}^m h_{\Sigma_1}(e_p,e_p)^2 \geq \frac{1}{m-1} \left ( \sum_{p=2}^m h_{\Sigma_1}(e_p,e_p) \right )^2\] 
and 
\[ \sum_{q=m+1}^n h_{\Sigma_1}(e_q,e_q)^2 \geq \frac{1}{n-m} \left ( \sum_{q=m+1}^n h_{\Sigma_1}(e_q,e_q) \right )^2 =  \frac{1}{n-m} \left ( \sum_{p=2}^m h_{\Sigma_1}(e_p,e_p) \right )^2,\] 
where in the last step we have used the fact that $H_{\Sigma_1} = 0$. Putting these facts together, the assertion follows.
\end{proof}

\begin{lemma}[Extrinsic curvature terms on intermediate slices] \ \\
For $2 \leq m \leq n-1$ and $2 \leq k \leq m-1$ we have the estimate
\begin{align*}
    \mathcal{V}_k \geq \frac{m^2-2-n(m-2)}{2(m-1)(n-m)}
    \left ( \sum_{q=m+1}^n h_{\Sigma_k}(e_q,e_q) \right )^2.
\end{align*}
\label{Lemma:ExtrinsicCurvature_MidSlices}
\end{lemma}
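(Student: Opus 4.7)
The plan is to imitate the proof of Lemma \ref{Lemma:ExtrinsicCurvature_TopSlice} for the top slice, but now with a nontrivial mean curvature $H_{\Sigma_k}$. I will first discard off-diagonal contributions of $h_{\Sigma_k}$, then split the diagonal entries into two Cauchy--Schwarz blocks according to whether the index lies in $\{k+1,\dots,m\}$ or $\{m+1,\dots,n\}$, and finally optimise the resulting quadratic form in two scalar variables.

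To set up, fix the orthonormal frame $\{e_{k+1},\dots,e_n\}$ of $T_x \Sigma_k$ and write $d_p := h_{\Sigma_k}(e_p,e_p)$ for $k+1 \leq p \leq n$, together with
\[ A := \sum_{p=k+1}^m d_p, \qquad B := \sum_{p=m+1}^n d_p, \qquad H_{\Sigma_k} = A+B. \]
Since the pairs $(p,q)$ with $k+1 \leq p \leq m$, $p < q \leq n$ form a subset of all unordered off-diagonal pairs in $\{k+1,\dots,n\}$, we have
\[ |h_{\Sigma_k}|^2 - \sum_{p=k+1}^m \sum_{q=p+1}^n h_{\Sigma_k}(e_p,e_q)^2 \geq \sum_{p=k+1}^n d_p^2, \]
so it suffices to estimate the purely diagonal expression
\[ \sum_{p=k+1}^n d_p^2 - \Big(\tfrac{1}{2} - \tfrac{1}{2(k-1)}\Big) H_{\Sigma_k}^2 + \sum_{p=k+1}^m \sum_{q=p+1}^n d_p d_q. \]
The mixed sum of diagonal entries rearranges as $A H_{\Sigma_k} - \tfrac{1}{2} A^2 - \tfrac{1}{2}\sum_{p=k+1}^m d_p^2$, using the identity $\sum_{k+1 \leq q \leq p \leq m} d_p d_q = \tfrac{1}{2}(A^2 + \sum_{p=k+1}^m d_p^2)$.

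At this point I would apply Cauchy--Schwarz separately on each block,
\[ \sum_{p=k+1}^m d_p^2 \geq \frac{A^2}{m-k}, \qquad \sum_{p=m+1}^n d_p^2 \geq \frac{B^2}{n-m}, \]
reducing the bound to a quadratic form $\alpha A^2 + \beta A B + \gamma B^2$ in the two scalars $A,B$, with
\[ \alpha = \frac{m-1}{2(m-k)(k-1)}, \qquad \beta = \frac{1}{k-1}, \qquad \gamma = \frac{1}{n-m} - \frac{1}{2} + \frac{1}{2(k-1)}. \]
Since $\alpha > 0$, completing the square in $A$ yields
\[ \mathcal{V}_k \geq \Big(\gamma - \frac{\beta^2}{4\alpha}\Big)\, B^2, \]
and the claim reduces to the algebraic identity $\gamma - \beta^2/(4\alpha) = \tfrac{m^2-2-n(m-2)}{2(m-1)(n-m)}$.

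The main obstacle is this final simplification, which hinges on the cancellation $\tfrac{1}{2(k-1)} - \tfrac{m-k}{2(m-1)(k-1)} = \tfrac{1}{2(m-1)}$. This is what causes the slice index $k$ to drop out of the final coefficient, so that the bound depends only on the dimensions $m$ and $n$. Once this cancellation is in hand, the remainder of the argument is purely bookkeeping; no additional geometric input beyond the Cauchy--Schwarz step is needed, which is why I expect the proof to be essentially mechanical after the correct splitting of $h_{\Sigma_k}$ is identified.
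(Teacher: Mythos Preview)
Your proposal is correct and follows essentially the same route as the paper: discard the off-diagonal entries, reduce to a quadratic form in the two block sums $A=\sum_{p=k+1}^m d_p$ and $B=\sum_{q=m+1}^n d_q$ via Cauchy--Schwarz on each block, and then eliminate $A$. The only cosmetic difference is that the paper applies Young's inequality to the cross term $\tfrac{1}{k-1}AB$ with the specific weight $\tfrac{m-1}{2(m-k)}$ chosen so that the $A^2$-coefficient vanishes, whereas you complete the square in $A$; these are the same operation, and both lead to the key cancellation $\tfrac{1}{2(k-1)}-\tfrac{m-k}{2(m-1)(k-1)}=\tfrac{1}{2(m-1)}$ that removes the dependence on $k$.
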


\begin{proof}
To estimate the term $\mathcal{V}_k$, we start by discarding the off-diagonal terms: 
\begin{align*}
\mathcal{V}_k =&
  |h_{\Sigma_k}|^2
  - \left( \frac{1}{2} - \frac{1}{2(k-1)} \right) H_{\Sigma_k}^2 
+
\sum_{p = k+1}^{m} \sum_{q = p+1}^n
  \left(
   h_{\Sigma_k}(e_p, e_p) h_{\Sigma_k}(e_q, e_q)
   -
   h_{\Sigma_k}(e_p, e_q)^2 
  \right)\\
  \ge&
  \sum_{p=k+1}^nh_{\Sigma_k}(e_p,e_p)^2
  -  \left( \frac{1}{2} - \frac{1}{2(k-1)} \right)
 H_{\Sigma_k}^2 
+
\sum_{p = k+1}^{m} \sum_{q = p+1}^n
   h_{\Sigma_k}(e_p, e_p) h_{\Sigma_k}(e_q, e_q).
\end{align*}
The terms on the right hand side can be rewritten as follows:
\begin{align*}
\mathcal{V}_k \ge& \frac{1}{2} \sum_{p=k+1}^m h_{\Sigma_k}(e_p,e_p)^2 + \sum_{q=m+1}^n h_{\Sigma_k}(e_q,e_q)^2 \\ 
&+ \frac{1}{2(k-1)} \left ( \sum_{p=k+1}^m h_{\Sigma_k}(e_p,e_p) \right )^2 - \left ( \frac{1}{2}-\frac{1}{2(k-1)} \right ) \left ( \sum_{q=m+1}^n h_{\Sigma_k}(e_q,e_q) \right )^2 \\ 
&+ \frac{1}{k-1} \left ( \sum_{p=k+1}^m h_{\Sigma_k}(e_p,e_p) \right ) \left ( \sum_{q=m+1}^n h_{\Sigma_k}(e_q,e_q) \right ).
\end{align*}
The Cauchy--Schwarz inequality gives 
\[\sum_{p=k+1}^m h_{\Sigma_k}(e_p,e_p)^2 \geq \frac{1}{m-k} \left ( \sum_{p=k+1}^m h_{\Sigma_k}(e_p,e_p) \right )^2\] 
and 
\[\sum_{q=m+1}^n h_{\Sigma_k}(e_q,e_q)^2 \geq \frac{1}{n-m} \left ( \sum_{q=m+1}^n h_{\Sigma_k}(e_q,e_q) \right )^2.\] 
Moreover, Young's inequality implies 
\begin{align*} 
\left ( \sum_{p=k+1}^m h_{\Sigma_k}(e_p,e_p) \right ) \left ( \sum_{q=m+1}^n h_{\Sigma_k}(e_q,e_q) \right ) 
\geq & -\frac{m-1}{2(m-k)} \left ( \sum_{p=k+1}^m h_{\Sigma_k}(e_p,e_p) \right )^2 \\ 
&- \frac{m-k}{2(m-1)} \left ( \sum_{q=m+1}^n h_{\Sigma_k}(e_q,e_q) \right )^2. 
\end{align*} 
Putting these facts together, the assertion follows. 
\end{proof}

\begin{lemma}[Extrinsic curvature terms on bottom slice] \ \\
For $2 \leq m \leq n-1$ we have the estimate
\begin{equation}
    \mathcal{V}_m
    \geq
 \frac{m^2 - 2 - n(m-2)}{2(n-m)(m-1)} \, 
 H_{\Sigma_m}^2.
\end{equation}
\label{Lemma:ExtrinsicCurvature_BottomSlice}
\end{lemma}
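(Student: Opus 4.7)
The plan is to prove the lower bound on $\mathcal{V}_m$ by applying the trace Cauchy--Schwarz inequality to the scalar-valued second fundamental form $h_{\Sigma_m}$ of the bottom slice, and then simplifying the resulting coefficient of $H_{\Sigma_m}^2$ algebraically.

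First I would recall that $\Sigma_m$ is an $(n-m)$-dimensional hypersurface inside the $(n-m+1)$-dimensional manifold $\Sigma_{m-1}$. Completing $\{e_1,\dots,e_m\}$ to an orthonormal basis $\{e_1,\dots,e_n\}$ of $T_xN$ so that $\{e_{m+1},\dots,e_n\}$ is an orthonormal basis of $T_x\Sigma_m$, the mean curvature is $H_{\Sigma_m} = \sum_{q=m+1}^n h_{\Sigma_m}(e_q,e_q)$, while $|h_{\Sigma_m}|^2 \geq \sum_{q=m+1}^n h_{\Sigma_m}(e_q,e_q)^2$ (discarding off-diagonal terms). By Cauchy--Schwarz applied to the $(n-m)$ diagonal entries,
\begin{equation*}
|h_{\Sigma_m}|^2 \;\geq\; \frac{1}{n-m}\,H_{\Sigma_m}^2.
\end{equation*}

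Substituting this into the definition
\begin{equation*}
\mathcal{V}_m = |h_{\Sigma_m}|^2 - \left(\tfrac{1}{2}-\tfrac{1}{2(m-1)}\right)H_{\Sigma_m}^2
\end{equation*}
yields
\begin{equation*}
\mathcal{V}_m \;\geq\; \left(\frac{1}{n-m} - \frac{1}{2} + \frac{1}{2(m-1)}\right) H_{\Sigma_m}^2.
\end{equation*}
Putting the three fractions over the common denominator $2(n-m)(m-1)$, the numerator becomes $2(m-1) - (n-m)(m-1) + (n-m)$, which expands to $2m-2 - nm + n + m^2 - m + n - m = m^2 - 2 - n(m-2)$. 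This gives exactly the asserted coefficient.

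There is no real obstacle here: the only inputs are Cauchy--Schwarz and a short algebraic simplification. The estimate is sharp precisely when $h_{\Sigma_m}$ is a multiple of the induced metric on $\Sigma_m$ (so all diagonal entries agree and the off-diagonal entries vanish), which mirrors the sharpness structure of the bounds in Lemmas \ref{Lemma:ExtrinsicCurvature_TopSlice} and \ref{Lemma:ExtrinsicCurvature_MidSlices} and explains why the same coefficient $\frac{m^2-2-n(m-2)}{2(n-m)(m-1)}$ appears uniformly across the top, middle, and bottom slices.
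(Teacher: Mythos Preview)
Your proof is correct and follows essentially the same approach as the paper: both apply the trace inequality $|h_{\Sigma_m}|^2 \geq \frac{1}{n-m}\,H_{\Sigma_m}^2$ (the paper calls it the ``trace estimate for symmetric two-tensors'') to the definition of $\mathcal{V}_m$ and then simplify the resulting coefficient. Your write-up just spells out a bit more detail on the algebra and the equality case.
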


\begin{proof} 
We observe by the the trace estimate
for symmetric two-tensors the inequality
\begin{align*}
 \mathcal{V}_m
   &=
   |h_{\Sigma_m}|^2
    -  \left( \frac{1}{2} - \frac{1}{2(m-1)} \right)
 H_{\Sigma_m}^2 
 \geq
 \left(
  \frac{1}{n-m} 
  -
  \left(
   \frac{1}{2} - \frac{1}{2(m-1)}
  \right)
 \right)
 H_{\Sigma_m}^2 \\
 &=
 \frac{m^2 - 2 - n(m-2)}{2(n-m)(m-1)} \,
 H_{\Sigma_m}^2.
\end{align*}
\end{proof}

With the above observations we prove our first theorem:

\begin{proof}[Proof of Theorem \ref{theorem:SmoothSlicings}] \ 
Assume that $1 \leq m \leq n-1$ and $n(m-2) \leq m^2-2$. Suppose that $(N^n,g)$ is a closed and orientable
Riemannian manifold which admits a stable weighted slicing
\[
 \Sigma_{m} \subset \Sigma_{m-1} \subset \dots \subset \Sigma_1
 \subset \Sigma_0 = N^n. 
\]
If $m=1$, the stability inequality implies that $(N^n,g)$ cannot have positive Ricci curvature. Hence, it remains to consider the case when $2 \leq m \leq n-1$ and $n(m-2) \leq m^2-2$. In this case, it follows from Lemma \ref{Lemma:ExtrinsicCurvature_TopSlice}, Lemma \ref{Lemma:ExtrinsicCurvature_MidSlices}, and Lemma \ref{Lemma:ExtrinsicCurvature_BottomSlice} that $\mathcal{V}_k \geq 0$ for all $1 \leq k \leq m$. Using Lemma \ref{Lemma:Reduced-MainInequality}, we obtain the pointwise inequality  
\begin{align*}
    \mathcal{R} + \mathcal{E} + \mathcal{G} \geq 
    \mathcal{C}_m(e_1, \dots, e_m).
\end{align*}
If $\mathcal{C}_m(e_1, \dots, e_m)$ is strictly positive, this contradicts our main inequality, Lemma \ref{Lemma:MainInequality}. Therefore, the Riemannian manifold $(N^n,g)$
cannot have positive $m$-intermediate curvature.
\end{proof}

\section{Existence of stable weighted slicings}
\label{Section:ExistenceWeightedSlicings}

In this section we prove existence of stable weighted slicings of order $m$.
The argument uses the mapping degree and is essentially contained in Theorem 4.5
of \cite{Schoen-Yau:2017:Structure_PositiveScalar_HigherDimensions}.
Alternatively, one could also use an argument based on homology,
compare with Theorem 4.6 in \cite{Schoen-Yau:2017:Structure_PositiveScalar_HigherDimensions}.

\begin{proof}[Proof of Theorem \ref{Theorem: Existence of stable weighted slicings}]
Suppose $N^n$ and $M^{n-m}$ are closed and orientable manifolds,
and suppose $F: N^n \rightarrow \Torus^m \times M^{n-m}$ is a map of degree $d \neq 0$.
 The projection of $F$ onto the factors
 yields maps $f_0: N \rightarrow M$
 and maps $f_1, \dots, f_m: N \rightarrow \Sphere^1$.
 Let $\Theta$ be a top-dimensional form of
 the manifold $M$
 normalized such that $\int_M \Theta = 1$,
 and let $\theta$ be a one-form
 on the circle $\Sphere^1$ with $\int_{\Sphere^1}\theta = 1$.
We define the pull-back forms $\Omega := f_0^* \, \Theta$
and $\omega_j := f_j^* \, \theta$.
By the normalization condition we deduce that
$\int_N \omega_1 \wedge \dots \wedge \omega_m \wedge \Omega = d$.

We claim that one can construct closed and orientable
slices $\Sigma_k$
and weights $\rho_k$ such that
$\int_{\Sigma_k} \omega_{k+1} \wedge \dots \wedge \omega_m \wedge \Omega = d$.
We prove the claim by induction. The base case $k=0$
holds by the previous observation and by setting $\Sigma_0 := N$ and $\rho_0 := 1$.
For the induction step we suppose that we have constructed
the slice $\Sigma_{k-1}$
and the weight $\rho_{k-1}$,
such that 
$\int_{\Sigma_{k-1}} \omega_k \wedge \dots \wedge \omega_m \wedge \Omega = d$.

We define a class $\mathcal{A}_k$ by 
\begin{align*}
 \mathcal{A}_{k}
 =
 \left\{
  \Sigma \; \text{is an} \; (n-k)-\text{integer rectifiable current in } \Sigma_k
  \; \text{with} \;
  \int_{\Sigma} \omega_{k+1} \wedge \dots \wedge \omega_{m} \wedge \Omega = d
 \right\}.
\end{align*}

The first step is to show that the
class $\mathcal{A}_k$ is non-empty.
To prove this, let us fix a regular value
$p_k \in \Sphere^1$
of the map $f_k|_{\Sigma_{k-1}}: \Sigma_{k-1} \rightarrow \Sphere^1$. The existence of a regular value follows from Sard's Theorem.

On the complement $\Sphere^1 \backslash \{p_k\}$
the one-form $\theta$ is exact. In other words, there exists a smooth function $\psi_k: \Sphere^1 \backslash \{p_k\} \rightarrow \R$,
such that $d \psi_k = \theta$. Moreover, due to the normalization condition $\int_{\Sphere^1} \theta = 1$, the function
$\psi_k$ jumps by $1$ at $p_k$.

 We next consider the pre-image
$\tilde{\Sigma}_k = \{ x \in \Sigma_{k-1} : f_k(x) = p_k\}$. Since $p_k \in \Sphere^1$ is a regular value of the map $f_k|_{\Sigma_{k-1}}: \Sigma_{k-1} \rightarrow \Sphere^1$, it follows that $\tilde{\Sigma}_k$ is a closed and orientable submanifold of $\Sigma_{k-1}$. 
We define a function $\varphi_k: \Sigma_{k-1} \backslash \tilde{\Sigma}_k \rightarrow \R$ by setting $\varphi_k := \psi_k \circ f_k$. Since the pull-back commutes with the differential, we deduce $d \varphi_k = f_k^* \, (d \psi_k) = f_k^* \, \theta = \omega_k$ on $\Sigma_{k-1} \backslash \tilde{\Sigma}_k$.

The above observation (and the closedness
of the forms $\omega_k, \dots, \omega_m, \Omega$) implies
\begin{equation}
\omega_k \wedge \omega_{k+1} \wedge \dots \wedge \omega_m \wedge \Omega = d (\varphi_k \, \omega_{k+1} \wedge \dots \wedge \omega_m \wedge \Omega).
 \label{Eq:ExistenceTheory_DifferentialForms}
\end{equation}

We first consider the case when $\tilde{\Sigma}_k$ is empty. Integrating the identity
\eqref{Eq:ExistenceTheory_DifferentialForms}
over $\Sigma_{k-1}$ gives 
\begin{align*}
d = \int_{\Sigma_{k-1}} \omega_k \wedge \omega_{k+1} \wedge \dots \wedge \omega_m \wedge \Omega = \int_{\Sigma_{k-1}} d( \varphi_k \, \omega_{k+1} \wedge \cdots \wedge \omega_m \wedge \Omega) = 0.
\end{align*}
This is a contradiction.

It remains to consider the case when $\tilde{\Sigma}_k$ is non-empty. In this case $\tilde{\Sigma}_k$
is a smooth, orientable and embedded hypersurface
in $\Sigma_{k-1}$.
We integrate identity \eqref{Eq:ExistenceTheory_DifferentialForms}
over
$\Sigma_{k-1} \setminus \tilde{\Sigma}_k$.
By Stokes theorem, the integral of the right hand side yields two boundary integrals over $\tilde{\Sigma}_k$. Since the function $\varphi_k$ jumps by $1$ along $\tilde{\Sigma}_k$, we obtain 
\begin{align*} 
d = \int_{\Sigma_{k-1} \setminus \tilde{\Sigma}_k} \omega_k \wedge \omega_{k+1} \wedge \dots \wedge \omega_m \wedge \Omega 
&= \int_{\Sigma_{k-1} \setminus \tilde{\Sigma}_k} d (\varphi_k \, \omega_{k+1} \wedge \dots \wedge \omega_m \wedge \Omega) \\ 
&= \pm \int_{\tilde{\Sigma}_k} \omega_{k+1} \wedge \dots \wedge \omega_m \wedge \Omega, 
\end{align*}
where the sign depends on the choice of orientation of $\tilde{\Sigma}_k$. Therefore, we can make a choice of orientation so that $\tilde{\Sigma}_k$ belongs to the class $\mathcal{A}_k$. In particular, the class $\mathcal{A}_k$ is non-empty.

We consider the variational problem
\begin{align*}
 \sigma_{k} = \inf \left\{ \mathbb{M}_{\rho_{k-1}, n-k}(\Sigma) : \Sigma \in \mathcal{A}_{k} \right\},
\end{align*}
where $\mathbb{M}_{\rho_{k-1}, n-k}$ denotes the $\rho_{k-1}$-weighted mass functional
on $(n-k)$-integer rectifiable currents.
By the compactness theory for integer rectifiable currents, compare for example Theorem 7.5.3 in \cite{Simon:1983:GMT},
we deduce that there exists an $(n-k)$-integer rectifiable
current $\Sigma_{k+1}$ with mass $\mathbb{M}_{\rho_{k-1}, n-k}(\Sigma_{k}) = \sigma_{k}$.

By the regularity theory
for integer rectifiable currents, compare for example Theorem 7.5.8 in \cite{Simon:1983:GMT} or the survey \cite{DeLellis:GMT},
and the dimension bound $n \leq 7$ we deduce
that $\Sigma_{k}$ is a smooth and orientable
(and hence two-sided) hypersurface.
Moreover, the smooth surface $\Sigma_k$ is stable with respect to variations
of the weighted area, and therefore we can find a positive
first eigenfunction $v_k$
of the weighted stability operator.
Defining the weight $\rho_k$ by the formula $\rho_k = \rho_{k-1} \cdot v_k$ completes the induction step.
 \end{proof}

\end{document}